\newcommand{\eps}{\varepsilon}
\newtheorem{theorem}{Theorem}
\newtheorem{lemma}[theorem]{Lemma}
\newtheorem{definition}[theorem]{Definition}
\newtheorem{corollary}[theorem]{Corollary}
\newtheorem{claim}[theorem]{Claim}
\newtheorem{example}[theorem]{Example}
\newtheorem{proposition}[theorem]{Proposition}
\newtheorem{observation}[theorem]{Observation}
\renewcommand{\phi}{\varphi}
\newcommand{\grstar}[3]{%
	\ifthenelse{#1>1}
	{ 
		\node at (#3+#1/2+0.5,2) (o#2){};
		\foreach \xitem in {1,2,...,#1}
		{%
			\node at (#3+\xitem,0) (#2\xitem) {};
			\draw (#2\xitem) -- (o#2);
		}
	}
	{ 
		\node at (#3+#1/2+0.5,2) (o#2){};
		\node at (#3+#1/2+0.5,0) (1#2){};
		\draw (o#2)--(1#2);	
	}
}
\title{On embedding degree sequences}
\author{
	B\'ela Csaba\thanks{Bolyai Institute, Interdisciplinary Excellence Centre, University of 
Szeged, Hungary, {email: bcsaba@math.u-szeged.hu}. Partially supported by Ministry of Human Capacities, Hungary, Grant 20391-3/2018/FEKUSTRAT, NKFIH Fund No.~KH 129597 and by the NKFIH Fund No.~SNN 117879.},
	B\'alint V\'as\'arhelyi\thanks{Bolyai Institute, University of Szeged, Hungary, email: mesti@math.u-szeged.hu. {Supported by T\'AMOP-4.2.2.B-15/1/KONV-2015-0006.}} 
}
\begin{document}

\date{}

\maketitle

\abstract{
	Assume that we are given two graphic sequences, $\pi_1$ and $\pi_2$. We consider conditions for $\pi_1$ and $\pi_2$ which guarantee that there exists a simple graph $G_2$ realizing $\pi_2$ such that $G_2$ is the subgraph of any simple graph $G_1$ that realizes $\pi_1$.
}


\section{Introduction}
All graphs considered in this paper are simple. We use standard graph theory notation, see for example~\cite{west}. Let us provide a short list of a few perhaps not so common notions, notations. Given a bipartite graph $G(A, B)$ we call it  \textit{balanced} if 
$|A|=|B|$. This notion naturally generalizes for $r$-partite graphs with $r\in \mathbb{N},$ $r\ge 2$. 

If $S\subset V$ for some graph $G=(V, E)$ then the subgraph spanned by $S$ is denoted by $G[S]$. Moreover, 
let $Q\subset V$ so that $S\cap Q=\emptyset,$ then $G[S, Q]$ denotes the bipartite subgraph of $G$ on vertex
classes $S$ and $Q,$ having every edge of $G$ that connects a vertex of $S$ with a vertex of $Q$. The number
of vertices in $G$ is denoted by $v(G),$ the number of its edges is denoted by $e(G)$. The degree of a vertex $x\in V(G)$ is denoted by $deg_G(x),$ or if $G$ is clear from the context, by $deg(x)$. The number of neighbors of $x$ in a subset 
$S\subset V(G)$ is denoted by $deg_G(x, S),$ and $\delta(G)$ and $\Delta(G)$ denote the minimum and maximum degree of $G,$ respectively. 
The complete graph on $n$ vertices
is denoted by $K_n,$ the complete bipartite graph with vertex class sizes $n$ and $m$ is denoted by $K_{n, m}$.

A finite sequence of natural numbers $\pi=(d_1,\ldots,d_n)$ is a \textit{graphic sequence} or \textit{degree sequence} if there exists a graph $G$ such that $\pi$ is the (not necessarily) monotone degree sequence of $G$. Such a graph $G$ \textit{realizes} $\pi$.
For example, the degree sequence $\pi = (2,2,\ldots,2)$ can be realized only by vertex-disjoint union of cycles.

The largest value of $\pi$ is denoted by $\Delta(\pi)$. Often the positions of $\pi$ will be identified with the elements of a vertex set $V$. In this case, we write $\pi(v)$ $(v\in V)$ for the corresponding component of $\pi$.

The degree sequence $\pi=(a_1,\ldots,a_k;\allowbreak b_1,\ldots,b_l)$ is a \textit{bigraphic} sequence if there exists a simple bipartite graph $G = G(A,B)$ with $|A| = k$, $|B| = l$ realizing $\pi$
such that the degrees of vertices in $A$ are $a_1, \ldots, a_k,$ and the degrees of the vertices of $B$ are $b_1, \ldots, b_l$.

Let $G$ and $H$ be two graphs on $n$ vertices. We say that $H$ is a subgraph of $G,$ if we can delete edges from $G$ so that we obtain an isomorphic copy of $H$. We denote this relation
by $H\subset G$. In the literature the equivalent complementary formulation can be found as well: we say that $H$ and $\overline{G}$ \textit{pack} if there exist edge-disjoint copies of $H$ and $\overline{G}$ 
in $K_n$. Here $\overline{G}$ denotes the \textit{complement} of $G$.

Two classical results in this field are the following theorems.

\begin{theorem}[Dirac, \cite{dirac}]
	Every graph $G$ with $n\geq 3$ vertices and minimum degree $\delta(G) \geq \frac{n}{2}$ has a Hamilton cycle.
\end{theorem}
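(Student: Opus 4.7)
The plan is to start from a longest path in $G$, close it into a cycle using the minimum-degree condition via a standard rotation, and then show that this cycle already spans $V(G)$. As a preliminary, I would check that $G$ is connected: otherwise the smaller of two components would contain at most $n/2$ vertices and thus force some vertex in it to have degree less than $n/2$, contradicting the hypothesis.

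Let $P = v_1 v_2 \cdots v_k$ be a path of maximum length in $G$. By maximality, every neighbor of $v_1$ and every neighbor of $v_k$ already lies on $P$ (any outside neighbor would extend $P$). Consider the index sets
\[
S = \{\, i \in \{1,\ldots,k-1\} : v_1 v_{i+1} \in E(G)\,\}, \qquad T = \{\, i \in \{1,\ldots,k-1\} : v_i v_k \in E(G)\,\}.
\]
Then $|S| = \deg(v_1) \geq n/2$ and $|T| = \deg(v_k) \geq n/2$, so $|S|+|T| \geq n > k-1$, forcing $S \cap T \neq \emptyset$. For any $i \in S \cap T$ the cycle
\[
C := v_1 v_2 \cdots v_i v_k v_{k-1} \cdots v_{i+1} v_1
\]
uses exactly the vertex set of $P$.

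It remains to show $V(C) = V(G)$. Otherwise, since $G$ is connected there is an edge from some $w \in V(G)\setminus V(C)$ to some vertex of $C$; breaking $C$ at that vertex and attaching $w$ yields a path on $k+1$ vertices, contradicting the maximality of $P$. Hence $C$ is a Hamilton cycle. The main obstacle, and the only place where the full strength of $\delta(G) \geq n/2$ is used, is the pigeonhole step producing the crossing index $i$; the connectivity check and the final extension argument are routine structural bookkeeping about longest paths.
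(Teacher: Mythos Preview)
Your proof is correct and is the standard longest-path/rotation argument for Dirac's theorem. Note, however, that the paper does not actually prove this statement: it is merely quoted as a classical result with a citation to~\cite{dirac}, so there is no ``paper's own proof'' to compare against.
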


\begin{theorem}[Corr\'adi-Hajnal, \cite{corradi}]
	Let $k\geq 1$, $n\geq 3k$, and let $H$ be an $n$-vertex graph with $\delta(H)\geq 2k$. Then $H$ contains $k$ vertex-disjoint cycles.
\end{theorem}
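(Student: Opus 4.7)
The plan is to proceed by induction on $k$. For the base case $k = 1$, the hypothesis $\delta(H) \geq 2$ forces a cycle by the classical longest-path argument: in a longest path $v_0 v_1 \cdots v_\ell$, every neighbor of $v_0$ already lies on the path, and among the $\geq 2$ such neighbors some $v_i$ with $i \geq 2$ closes the cycle $v_0 v_1 \cdots v_i v_0$.

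For the inductive step, assume the result for $k-1$ and let $H$ satisfy the hypotheses for $k$. I would argue by contradiction: suppose $H$ has no $k$ pairwise vertex-disjoint cycles, and pick a family $\mathcal{C} = \{C_1, \ldots, C_{k-1}\}$ of $k-1$ pairwise disjoint cycles in $H$ that minimizes the total number of vertices $L = \sum_i |V(C_i)|$. Set $R = V(H) \setminus V(\mathcal{C})$. The maximality of $|\mathcal{C}|$ forces $H[R]$ to be acyclic, hence a forest, so it contains a vertex $u$ with $\deg_{H[R]}(u) \leq 1$. Then $u$ has at least $2k-1$ neighbors in $V(\mathcal{C})$, and by pigeonhole some $C_j$ contains at least $\lceil (2k-1)/(k-1) \rceil \geq 3$ neighbors of $u$.

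The next step uses these neighbors to break the minimality of $L$. If $x_1, x_2, x_3$ are three of them occurring in that cyclic order along $C_j$, they split $C_j$ into three arcs whose lengths sum to $|C_j|$, so the shortest arc has length at most $\lfloor |C_j|/3 \rfloor$. Joining $u$ to the endpoints of that arc yields a cycle $C_j^\star$ of length $\lfloor |C_j|/3 \rfloor + 2$, which is strictly less than $|C_j|$ whenever $|C_j| \geq 4$. Swapping $C_j \to C_j^\star$ in $\mathcal{C}$ then keeps a family of $k-1$ pairwise disjoint cycles but strictly decreases $L$, contradicting the choice of $\mathcal{C}$.

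The main obstacle is the residual case $|C_j| = 3$, where $u$ is adjacent to an entire triangle and the simple swap gives no gain. Here one would recruit a second vertex $u' \in R$ with $\deg_{H[R]}(u') \leq 1$ (available because $|R| \geq n - L \geq 3$ and a forest on $\geq 2$ vertices has at least two vertices of degree $\leq 1$) and combine the $\geq 2k-1$ edges each of $u, u'$ sends into $\mathcal{C}$ with the bound $\delta(H) \geq 2k$ to either split some short $C_i$ into two disjoint cycles through $u$ and $u'$, or to perform a multi-cycle rearrangement producing $k$ disjoint cycles or a family of smaller total length. Carrying out this case analysis cleanly — in particular, tracking how the neighborhoods of $u$ and $u'$ interleave on the short cycles of $\mathcal{C}$ — is where I expect the bulk of the technical difficulty.
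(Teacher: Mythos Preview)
The paper does not prove this theorem at all; it is quoted as a classical result and attributed to Corr\'adi and Hajnal with a citation, nothing more. So there is no argument in the paper to compare your proposal against.

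On its own merits, your plan follows the standard extremal strategy (choose $k-1$ disjoint cycles of minimum total length, exploit a leaf of the forest $H[R]$), and the reduction via pigeonhole and arc-shortening is correctly set up for $|C_j|\ge 4$. The genuine gap is exactly where you flag it: the case where the cycle $C_j$ absorbing three neighbours of $u$ is a triangle. Your sketch for this case (``recruit a second leaf $u'$ and combine their edges to split some short $C_i$ or rearrange'') is not yet an argument. One has to analyse carefully how the neighbourhoods of $u$ and $u'$ distribute over the $C_i$, treat the subcase where both $u$ and $u'$ are fully adjacent to the \emph{same} triangle (which does not by itself yield two disjoint cycles without a further edge), and handle the configuration where every $C_i$ receives at most three edges from each of $u,u'$, which forces most $C_i$ to be triangles and leads to a delicate exchange argument. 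This is precisely the content of the original Corr\'adi--Hajnal proof, and it is substantial; your outline stops before it begins.

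A smaller point: the bound $|R|\ge n-L\ge 3$ that you invoke to guarantee a second leaf $u'$ is not justified as written. Minimality of $L$ does not directly force $L\le n-3$; you would first need to argue (again via the shortening step) that in the residual case every $C_i$ is a triangle, whence $L=3(k-1)$ and $|R|=n-3(k-1)\ge 3$. That reduction is available, but it should be stated.
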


It is an old an well-understood problem in graph theory to tell whether a given sequence of natural numbers is a degree sequence 
or not. We consider a generalization of it, which is remotely related to the so-called discrete tomography\footnote{In the discrete tomography problem we are given two degree sequences of length $n$, $\pi_1$ and $\pi_2,$ and the questions is whether there exists a graph $G$ on $n$ vertices with a red-blue edge coloration so that the following holds: for every vertex $v$ the red degree of $v$ is $\pi_1(v)$ and the blue degree of $v$ is $\pi_2(v)$.} (or degree sequence packing) problem (see e.g.~\cite{ferrara}) as well.
The question whether a sequence of $n$ numbers $\pi$ is a degree sequence can also be formulated as follows: Does $K_n$ have a subgraph $H$ such that the degree sequence of $H$ 
is $\pi?$ The question becomes more general if $K_n$ is replaced by some (simple) graph $G$ on $n$ vertices. If the answer is yes, we say that $\pi$ \textit{can be embedded into} $G,$ or equivalently, 
$\pi$ packs with $\overline{G}$. 

One of our main results is the following.

\begin{theorem}\label{fotetel}
For every $\eta>0$ and $D\in\mathbb{N}$ there exists an $n_0 = n_0(\eta, D)$ such that for all $n>n_0$ if $G$ is a graph on $n$ vertices with $\delta(G)\geq \left(\frac{1}{2}+\eta\right)n$ and $\pi$ is a degree sequence of length $n$ with $\Delta(\pi) \leq D$, then $\pi$ is embeddable into $G$.
\end{theorem}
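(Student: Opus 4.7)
The plan is to realize the degree sequence $\pi$ as a balanced bipartite graph $H_0$ on $V(G)$ and embed $H_0$ into a bipartite subgraph of $G$, using the Szemer\'edi Regularity Lemma together with the Koml\'os-S\'ark\"ozy-Szemer\'edi Blow-up Lemma.

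First, I would take a uniformly random balanced bipartition $V(G) = A \cup B$ with $\bigl| |A| - |B| \bigr| \leq 1$. A Chernoff bound applied to each random variable $\deg_G(v, B)$ shows that with high probability every vertex $v$ satisfies $\deg_G(v, A), \deg_G(v, B) \geq (1/4 + \eta/3) n$; hence the bipartite graph $G^\ast := G[A, B]$ has minimum degree at least $(1/2 + \eta')(n/2)$ for some $\eta' = \eta'(\eta) > 0$. Fix such a partition.

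Next, I would distribute the multiset $\pi$ among the vertices of $A$ and $B$ to produce a bipartite degree pair $(\pi^A; \pi^B)$ with $\sum \pi^A = \sum \pi^B$; this can be arranged for generic $\pi$ by a greedy-then-swap argument that exploits the boundedness $\Delta(\pi)\leq D$ and the largeness of each degree class. Gale-Ryser then provides a balanced bipartite graph $H_0$ on parts $A, B$ realizing $(\pi^A; \pi^B)$, so $H_0$ has degree sequence $\pi$ (as a multiset) and $\Delta(H_0) \leq D$.

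Finally, I would embed $H_0$ into $G^\ast$. Apply the Regularity Lemma to $G^\ast$ to obtain an $\eps$-regular bipartite cluster partition. The reduced bipartite graph inherits minimum degree at least $(1/2 + \eta'/2) k$ on each side, so by Hall's theorem it admits a perfect matching between cluster-sides. Sanitize each matched regular pair into a super-regular one by moving a small number of vertices, decompose $H_0$ along this matching into bounded-degree bipartite pieces, and invoke the Blow-up Lemma to embed each piece into its super-regular pair. Since $H_0 \subseteq G^\ast \subseteq G$, this realizes $\pi$ as a subgraph of $G$.

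The main obstacle is the degree sequences that do not admit a balanced bipartite realization (for instance, $\pi = (2, 2, \ldots, 2)$ when $n$ is odd, which forces an odd cycle and hence precludes any bigraphic assignment with $\sum \pi^A = \sum \pi^B$). For such $\pi$ one must combine the bipartite approach above with a separate treatment of a bounded-size non-bipartite "defect''---e.g.\ first locate a suitable short odd substructure in $G$ off to the side (easy since $\delta(G) \geq (1/2+\eta)n$), and then apply the bipartite machinery to the remaining degree sequence. A secondary technical burden, though routine, is the Blow-up step, which must be carried out carefully to ensure the exact degrees are realized and not merely approximately matched.
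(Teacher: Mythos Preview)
Your outline diverges from the paper's, and the crucial step---distributing $H_0$ among the matched super-regular pairs---hides a genuine gap. Gale--Ryser is only an existence statement and gives you no control over the component structure of the realization; in particular $H_0$ may be connected (e.g.\ the bigraphic sequence $(2,\ldots,2;2,\ldots,2)$ is realized by a single Hamilton cycle of $K_{n/2,n/2}$). If $H_0$ is connected you cannot ``decompose $H_0$ along the matching'' into pieces that sit entirely inside individual cluster pairs: any nontrivial vertex partition cuts edges of $H_0$, and those cut edges have nowhere to go, since the only super-regular pairs you have arranged are the matching edges. The Blow-up Lemma applied to a matching $R$ embeds only those $H$ that already fit into the blown-up matching, i.e.\ disjoint unions of bipartite pieces each small enough for one pair. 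So either you must produce a realization of $\pi$ with bounded-size components (which is real work, not a corollary of Gale--Ryser), or you must build a richer super-regular spanning structure in the reduced graph and argue that an \emph{arbitrary} bounded-degree bipartite $H_0$ embeds into it---neither of which your sketch provides.

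The paper takes the first route and does the work explicitly. It builds a realization $H$ of $\pi$ whose components are balanced complete bipartite graphs $K_{i,i}$ (one for each block of $2i$ vertices assigned degree $i$), together with a leftover set $A$ of at most $5D^3$ vertices carrying a $3$-chromatic graph. The union of the $K_{i,i}$'s is a bipartite graph with bounded-size components, hence well-separable, and is embedded via a black-box theorem on embedding bounded-degree well-separable graphs into hosts of minimum degree $(\tfrac12+\gamma)n$. The constant-size $3$-chromatic part $H[A]$ is embedded first, using the Chv\'atal--Szemer\'edi strengthening of Erd\H{o}s--Stone to locate a $K_3(t)$ with $t=\Omega(\log n)$ inside $G$. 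Your ``bounded-size non-bipartite defect'' is the right instinct and matches the paper's $H[A]$, but ``locate a suitable short odd substructure'' is not enough: you need a balanced complete tripartite subgraph large enough to absorb an arbitrary $3$-colorable graph on $O(D^3)$ vertices, which is exactly what Chv\'atal--Szemer\'edi supplies.
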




It is easy to see that \autoref{fotetel} is sharp up to the $\eta n$ additive term. For that let $n$ be an even number, and suppose that every element of $\pi$ is 1. Then the only graph that realizes 
$\pi$ is the union of $n/2$ vertex disjoint edges. Let $G=K_{n/2-1, n/2+1}$ be the complete bipartite graph with vertex class sizes $n/2-1$ and $n/2+1$. Clearly $G$ does not have $n/2$ vertex disjoint edges.

In order to state the other main result of the paper we introduce a new notion. 

\begin{definition}
Let $q\ge 1$ be an integer.
A bipartite graph $H$ with vertex classes $S$ and $T$ is $q$-unbalanced, if $q|S|\le |T|$. The degree sequence $\pi$ is $q$-unbalanced, if it can be realized by a $q$-unbalanced bipartite graph.  
\end{definition}


\begin{theorem}\label{konstans}

	Let $q\ge 1$ be an integer. For every $\eta>0$ and $D\in \mathbb{N}$ there exist an $n_0=n_0(\eta, q)$ and an $M=M(\eta, D, q)$ such that if 
	$n\ge n_0,$ $\pi$ is a $q$-unbalanced degree sequence of length $n-M$
	with $\Delta(\pi)\le D,$ $G$ is a graph on $n$ vertices with $\delta(G)\geq \frac{n}{q+1}+\eta n$, then $\pi$ can be embedded into $G$.
\end{theorem}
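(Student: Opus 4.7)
The plan is to apply the regularity method. I would first fix a bipartite realization $H = H(S, T)$ of $\pi$, guaranteed by the $q$-unbalanced assumption, with $s := |S|$, $t := |T|$, $s + t = n - M$, and $t \ge qs$, so $s \le (n - M)/(q+1)$. The $M$ vertices of $V(G)$ not used in the embedding will serve as slack for the exceptional vertices and super-regularity losses produced by the regularity lemma.

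Next, I would apply Szemer\'edi's Regularity Lemma to $G$ with parameters $\eps \ll d \ll \eta$, producing an exceptional set $V_0$ of size at most $\eps n$, clusters $V_1, \ldots, V_L$ of common size $m$, and a reduced graph $R$ satisfying $\delta(R) \ge \bigl(\tfrac{1}{q+1} + \tfrac{\eta}{2}\bigr)L$. A random bipartition of the clusters into $\mathcal X$ (of size $\approx L/(q+1)$) and $\mathcal Y$ (of size $\approx qL/(q+1)$) yields, via Chernoff concentration, a bipartite reduced graph $R[\mathcal X, \mathcal Y]$ whose minimum degree on each side is at least $\bigl(\tfrac{1}{q+1} + \tfrac{\eta}{4}\bigr)$ times the size of the opposite side; a small deterministic rebalancing of a few clusters then makes $|\mathcal X|m$ and $|\mathcal Y|m$ match $s$ and $t$ up to an additive $m$.

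After that, I would construct an assignment $\psi$ sending each $u \in S$ to some cluster in $\mathcal X$ and each $v \in T$ to some cluster in $\mathcal Y$ such that (i) each cluster receives at most $(1 - \eps)m$ vertices and (ii) every edge of $H$ has its endpoints assigned to a pair that is an edge of $R[\mathcal X, \mathcal Y]$. Such a $\psi$ exists because the fractional min-degree of $R[\mathcal X, \mathcal Y]$ together with $\Delta(H) \le D$ supports a fractional homomorphism of $H$ into $R[\mathcal X, \mathcal Y]$ with balanced preimages, which can be rounded via the Lov\'asz Local Lemma. I would then super-regularize each used pair $(V_i, V_j)$ (discarding $O(\eps m)$ vertices per cluster into the exceptional set) and apply the Blow-up Lemma of Koml\'os--S\'ark\"ozy--Szemer\'edi pair-by-pair to turn $\psi$ into a genuine embedding of $H$ into $G$. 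The total number of exceptional vertices, namely $V_0$ together with the super-regularization losses, is bounded by the constant $M = M(\eta, D, q)$.

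The hardest step is the assignment: one must simultaneously respect cluster capacities (at most $m$ per cluster) and edge adjacencies (within $R[\mathcal X, \mathcal Y]$), using only the fractional min-degree coming from $\delta(G) \ge n/(q+1) + \eta n$. The boundedness $\Delta(H) \le D$ limits the number of simultaneous constraints faced by any single assignment decision, which is why $M$ depends on $D$. The $q$-unbalanced hypothesis is used essentially, since without it the $1{:}q$ cluster-partition ratio forced by the min-degree threshold would not be compatible with the bipartite structure of $H$.
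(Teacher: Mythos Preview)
Your proposal contains a genuine and fatal gap in the final accounting for $M$. You write that ``the total number of exceptional vertices, namely $V_0$ together with the super-regularization losses, is bounded by the constant $M = M(\eta, D, q)$.'' This is false: $|V_0|\le \eps n$ and the super-regularization losses are $O(\eps m)$ per cluster, hence $O(\eps n)$ in total. Both quantities are linear in $n$, not bounded by any function of $\eta, D, q$ alone. With your scheme the best you can obtain is $M=\Theta(\eps n)$, which yields only a weak version of the theorem with $\pi$ of length $(1-\eps)n$, not $n-M$ for constant $M$. The paper avoids this by \emph{covering} the exceptional set $W_0$ during the embedding: each vertex of $W_0$ is assigned to a non-exceptional cluster to which it has large degree (this is possible by a counting argument using the $\tfrac{n}{q+1}+\eta n$ minimum degree), and is then absorbed in a first embedding phase via a Key-Lemma-type argument. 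The constant $M$ in the paper arises only from a bounded amount of rounding slack per star in the reduced-graph decomposition, and the number of stars is at most the number of clusters, which is itself a constant $M(\eps)$.

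There is a second, related gap. You take an \emph{arbitrary} $q$-unbalanced realization $H$ of $\pi$ and then claim a balanced homomorphism of $H$ into the bipartite reduced graph exists ``via the Lov\'asz Local Lemma.'' This is not justified: the bad-event probability for a single edge of $H$ under a uniformly random assignment is roughly $1-\tfrac{1}{q+1}$, which for $q\ge 2$ is too large for the LLL to bite against the $O(D)$ dependency degree. Moreover, your $H$ may well be connected, in which case applying the Blow-up Lemma ``pair-by-pair'' makes no sense, and super-regularizing every pair of $R$ used by $\psi$ simultaneously is in general impossible. The paper sidesteps both problems by first proving (via a zero-sum lemma and Gale--Ryser) that $\pi$ has a realization $H$ which is a vertex-disjoint union of $q$-unbalanced bipartite pieces of size at most $4D^2$, and by decomposing the reduced graph into vertex-disjoint stars with at most $q$ leaves; each small piece of $H$ is then placed inside a single star, so super-regularity is only needed along the star edges and the Blow-up Lemma is applied star-by-star.
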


Hence, if $\pi$ is unbalanced, the minimum degree requirement of \autoref{fotetel} can be substantially decreased, what we pay for this is that the length of $\pi$ has to be slightly smaller than the number 
of vertices in the host graph.

\section{Proof of  \autoref{fotetel}}

\begin{proof}
	
	First, we find a suitable realization $H$ of $\pi,$ our $H$ will consists of components of bounded size. Second, we embed $H$ into $G$ using a theorem by Chv\'atal and Szemer\'edi, and a result on embedding so called well-separable graphs.
The details are as follows.



We construct $H$ in several steps.
At the beginning, let $H$ be the empty graph and let all degrees in $\pi$ be \textit{active}.	
While we can find $2i$ active degrees of $\pi$ with value $i$ (for some $1\leq i\leq \Delta(\pi)$) we realize them with a $K_{i,i}$ (that is, we add this complete bipartite graph to $H$, and the $2i$ degrees are ``inactivated''). When we stop we have at most
$\sum_{i=1}^{\Delta(\pi)} (2i-1)$ active degrees.
This way we obtain several components, each being a balanced complete bipartite graph. These are the \textit{type 1 gadgets}. Observe that if a vertex $v$ belongs to some type 1 gadget, then its degree is exactly $\pi(v).$

Let $R=R_{odd}\cup R_{even}$ be the vertex set that is identified with the active vertices ($v\in R_{odd}$ if and only if the assigned active degree is odd).
Since $\sum\limits_{v\in R}d(v)$ must be an even number we have that $|R_{odd}|$ is even. Add a perfect matching on $R_{odd}$ to $H$. With this we achieved that every vertex of $R$ misses an even number of edges.
 
Next we construct the \textit{type 2 gadgets} using the following algorithm. 
In the beginning every type 1 gadget is \textit{unmarked}. Suppose that $v\in R$ is an active vertex. Take a type 1 gadget
 $K,$ \textit{mark} it, and let $M_K$ denote an arbitrarily chosen perfect matching in
$K$ ($M_K$ exists since $K$ is a balanced complete bipartite graph). Let $xy$ be an arbitrary edge in $M_K$. Delete the $xy$ edge and add the new edges $vx$ and $vy$. While $v$
is missing edges repeat the above procedure with edges of $M_K,$ until $M_K$ becomes empty. If $M_K$ becomes empty, take a new unmarked type 1 gadget $L,$ and repeat the
method with $L$. It is easy to see that in $\pi(v)/2$ steps $v$ reaches its desired degree and gets inactivated. Clearly, the degrees of vertices in the marked type 1 gadgets have not changed. 

	\begin{figure}[h!]
	\centering
\tikzstyle{background rectangle}=
[draw=black,rounded corners=1ex]

\scalebox{0.5}{
	\begin{tikzpicture}[thick,
	snode/.style={draw,fill=black,circle,scale=2},
	every node/.style={inner sep=1pt},
	]
	
	\begin{scope}[xshift=0cm,yshift=0cm,start chain=going right,node distance=5mm]
	\foreach \i in {1,...,7}
	\node[snode,on chain,label=above:1] (a\i) {};
	\end{scope}
	
	\begin{scope}[xshift=0cm,yshift=-4cm,start chain=going right,node distance=5mm]
	\foreach \i in {1,...,7}
	\node[snode,on chain,label=below:2] (b\i) {};
	\end{scope}
	
	\begin{scope}[xshift=9cm,yshift=0cm,start chain=going right,node distance=5mm]
	\foreach \i in {1,...,5}
	\node[snode,on chain,label=above:2] (g\i) {};
	\end{scope}
	
	\begin{scope}[xshift=9cm,yshift=-4cm,start chain=going right,node distance=5mm]
	\foreach \i in {1,...,5}
	\node[snode,on chain,label=below:3] (h\i) {};
	\end{scope}
	
	\node [snode,xshift=3cm,yshift=-1cm,label=below:3] (j1) {};
	\node [snode,xshift=3.7cm,yshift=-1cm,label=below:1] (j2) {};
	
	\draw (j1) -- (j2);
	
	\foreach \i in {1,...,7}
	{	
		\ifthenelse{\i<6}{	\draw (j1)--(a\i); \draw (j1)--(b\i);\draw (j2)--(g\i);\draw (j2)--(h\i);}{\draw (a\i)--(b\i);}
		\foreach \j in {1,...,7}
		{
			\ifthenelse{\not{\j=\i}}{\draw (a\i)--(b\j);}{}
			\ifthenelse{{\i<6}\and{\j<6}\and\not{\j=\i}}{\draw (g\i)--(h\j);}{}
		}
	}
	
	\node [snode,xshift=3.5cm,yshift=-4cm,label=above:1] (z) {};
	
	\begin{scope}[xshift=0cm,yshift=-10cm,start chain=going right,node distance=5mm]
	\foreach \i in {1,...,7}
	\node[snode,on chain,label=below:3] (c\i) {};
	\end{scope}
	
	\begin{scope}[xshift=0cm,yshift=-6cm,start chain=going right,node distance=5mm]
	\foreach \i in {1,...,7}
	\node[snode,on chain,label=above:2] (d\i) {};
	\end{scope}
	
	\begin{scope}[xshift=9cm,yshift=-10cm,start chain=going right,node distance=5mm]
	\foreach \i in {1,...,5}
	\node[snode,on chain,label=below:2] (e\i) {};
	\end{scope}
	
	\begin{scope}[xshift=9cm,yshift=-6cm,start chain=going right,node distance=5mm]
	\foreach \i in {1,...,5}
	\node[snode,on chain,label=above:3] (f\i) {};
	\end{scope}

	\begin{scope}[xshift=4.8cm,yshift=-11.5cm,start chain=going below,node distance=5mm]
	\foreach \i in {1,...,5}
	\node[snode,on chain,label=left:2] (r\i) {};
	\end{scope}
	
	\begin{scope}[xshift=8.8cm,yshift=-11.5cm,start chain=going below,node distance=5mm]
	\foreach \i in {1,...,5}
	\node[snode,on chain,label=right:3] (s\i) {};
	\end{scope}

	\draw (j1) -- (j2);
	
	\foreach \i in {1,...,7}
	{	
		\ifthenelse{\i<6}{	\draw (z)--(c\i); \draw (z)--(d\i);\draw (z)--(e\i); \draw (z)--(f\i);\draw (z)--(r\i);\draw (z)--(s\i);}{\draw (c\i)--(d\i);}
		\foreach \j in {1,...,7}
		{
			\ifthenelse{\not{\j=\i}}{\draw (c\i)--(d\j);}{}
			\ifthenelse{{\i<6}\and{\j<6}\and\not{\j=\i}}{\draw (e\i)--(f\j);\draw (r\i)--(s\j);}{}
		}
	}

	\end{tikzpicture}
}
\footnotesize\caption{Type 2 gadgets of $H$ with a 3-coloring}
\protect\label{fig1}
\normalsize

\end{figure}
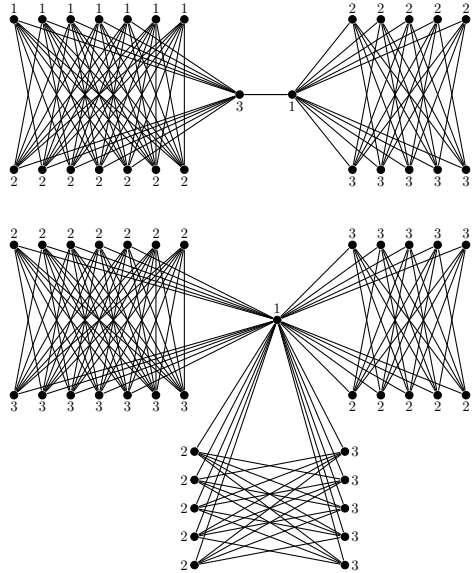

\autoref{fig1} shows examples of type 2 gadgets. In the upper one two vertices of $R_{odd}$ were first connected by an
edge and then two type 1 gadgets were used so that they could reach their desired degree, while in the lower one we used 
three type 1 gadgets for a vertex of $R.$ The numbers at the vertices indicate the colors in the 3-coloring of $H.$

Let $A\subset V(H)$ denote the set of vertices containing the union of all type 2 gadgets. Observe that type 2 gadgets are 
3-colorable and all have less than $5\Delta^2(\pi)$ vertices.  
Let us summarize our knowledge about $H$ for later reference.
	
\begin{claim}\label{HSzerkezet}
\begin{enumerate}[(1)]

		\item $|A|\le 5\Delta^3(\pi),$
		\item the components of $H[V-A]$ are balanced complete bipartite graphs, each having size at most 
			$2\Delta(\pi),$
		\item $\chi(H[A])\le 3,$ and
		\item $e(H[A, V-A])=0$.	
\end{enumerate}
\end{claim}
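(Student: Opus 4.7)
The plan is to verify each of the four parts directly from the construction, in order.

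For part (1) I would first bound $|R|$. The greedy type 1 construction stops only when, for every $i \in \{1,\ldots,\Delta(\pi)\}$, fewer than $2i$ active degrees of value $i$ remain, so $|R|\le \sum_{i=1}^{\Delta(\pi)}(2i-1)=\Delta^2(\pi)$. Next I would account for the vertices absorbed into a single type 2 gadget. Fix $v\in R$ and trace the algorithm: each completely exhausted marked gadget $K_{j,j}$ contributes $2j$ vertices and supplies exactly $2j$ new edges at $v$, while the last gadget (possibly used only partially) introduces at most $2\Delta(\pi)$ extra vertices. Together with $v$ itself and, if $v\in R_{odd}$, its matching partner, the type 2 gadget built for $v$ contains at most $\pi(v)+2\Delta(\pi)+2\le 3\Delta(\pi)+2$ vertices. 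Crucially, the algorithm always selects a fresh (unmarked) gadget, so different $v$'s do not double-use vertices. Summing, $|A|\le |R|(3\Delta(\pi)+2)\le 4\Delta^3(\pi)\le 5\Delta^3(\pi)$.

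For parts (2) and (4), the key observation is that $V-A$ consists exactly of the unmarked type 1 gadgets: every vertex of $R$ lies in $A$, every marked type 1 gadget is absorbed into $A$, and nothing else is introduced by the construction. Each unmarked type 1 gadget is a $K_{i,i}$ with $i\le\Delta(\pi)$, so has at most $2\Delta(\pi)$ vertices, and distinct unmarked gadgets are vertex-disjoint and have no edges between them by construction. This yields (2). Part (4) then follows because no step of the construction ever creates an edge between a marked and an unmarked gadget, nor between $R$ and an unmarked gadget, nor deletes an edge inside an unmarked gadget.

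For part (3) I would color the type 2 gadgets one at a time: they are pairwise vertex-disjoint and carry no edges between them, so it suffices to 3-color each gadget separately. A gadget built from a single $v\in R_{even}$ gets $v$ colored $1$, and each attached modified $K_{j,j}$ has its two bipartition classes colored $2$ and $3$. A gadget built from a matched pair $v,v'\in R_{odd}$ gets $v$ colored $2$ and $v'$ colored $3$; each of $v$'s marked gadgets is then colored using colors $\{1,3\}$ on its two classes, and analogously for $v'$ with $\{1,2\}$. A quick check confirms that all edges --- the $R_{odd}$ matching edge $vv'$, the new edges $vx,vy$, and the surviving bipartite edges inside each modified $K_{j,j}$ --- are properly colored.

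The main obstacle is the accounting in (1): one must carefully handle the partially used final gadget and verify that distinct vertices of $R$ never share a marked gadget, which is forced by the ``unmarked'' selection rule. Once that is in place, (2), (3), (4) are essentially read off from the construction.
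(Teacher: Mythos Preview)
Your proof is correct and follows the same direct-verification approach the paper takes; the paper in fact gives no formal proof beyond the remark ``type 2 gadgets are 3-colorable and all have less than $5\Delta^2(\pi)$ vertices'' together with Figure~1 illustrating the 3-coloring, and you have supplied exactly the missing details. Your counting for (1) is even a bit sharper than what the paper's sketch suggests: the paper's per-gadget bound of $5\Delta^2(\pi)$ combined with $|R|\le\Delta^2(\pi)$ would naively yield only $5\Delta^4(\pi)$, whereas your observation that fully exhausted $K_{j,j}$'s contribute exactly as many vertices as edges supplied to $v$ gives the linear-in-$\Delta(\pi)$ per-vertex bound needed to reach $5\Delta^3(\pi)$.
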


We are going to show that $H\subset G$. For that we first embed the possibly 3-chromatic part $H[A]$ using the following strengthening of the Erd\H{o}s--Stone theorem proved by Chv\'{a}tal and Szemer\'edi~\cite{chvatal}.
	
	\begin{theorem}
		\label{es}
		Let $\phi>0$ and assume that $G$ is a graph on $n$ vertices where $n$ is sufficiently large. Let $r\in \mathbb{N},$ $r\ge 2$. If \[e(G)\ge \left(\frac{r-2}{2(r-1)}+\phi\right)n^2,\]
		then $G$ contains a $K_r(t)$, i.e. a complete $r$-partite graph with $t$ vertices in each class, such that
		\begin{equation}
			t>\frac{\log n}{500\log\frac{1}{\phi}}.
		\end{equation}
	\end{theorem}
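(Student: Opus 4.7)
The plan is to prove the theorem by induction on $r$, with the quantitative bound $t>\log n/(500\log(1/\phi))$ essentially dictated by the $r=2$ base case and the generous $1/500$ factor absorbing losses across induction levels.

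\textbf{Base case $r=2$.} Here $e(G)\ge \phi n^2$, so the average degree is $\bar d\ge 2\phi n$. A Kővári--Sós--Turán double count of stars $K_{1,t}$ gives
\[
\sum_{T\in\binom{V(G)}{t}} |N(T)| \;=\; \sum_{v\in V(G)} \binom{d(v)}{t} \;\ge\; n\binom{2\phi n}{t}
\]
by Jensen's inequality applied to $f(x)=\binom{x}{t}$. Hence some $t$-subset $T$ has $|N(T)|\ge n\binom{2\phi n}{t}/\binom{n}{t}\ge \tfrac12 n(2\phi)^t$ for $n$ large. Requiring $|N(T)|\ge t$ and solving for $t$ yields $t\le c\log n/\log(1/\phi)$ for an absolute constant $c$; any such $T$ with $t$ of its common neighbors exhibits a $K_{t,t}=K_2(t)\subseteq G$, and the constant $500$ is comfortably larger than the $c$ needed here.

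\textbf{Inductive step, $r-1\Rightarrow r$.} Given $G$ with $e(G)\ge\bigl(\tfrac{r-2}{2(r-1)}+\phi\bigr)n^2$, first \emph{clean} the graph: iteratively remove any vertex whose current degree is below $\bigl(\tfrac{r-2}{r-1}+\tfrac{\phi}{2}\bigr)$ times the current order. A standard edge-count shows this terminates with a subgraph $G'$ on $n'=\Omega(n)$ vertices satisfying $\delta(G')\ge\bigl(\tfrac{r-2}{r-1}+\tfrac{\phi}{2}\bigr)n'$. In particular $e(G')\ge \bigl(\tfrac{r-3}{2(r-2)}+\phi'\bigr)(n')^2$ for some $\phi'=\phi'(\phi,r)>0$, so the inductive hypothesis applied to $G'$ yields a $K_{r-1}(s)\subseteq G'$ with classes $V_1,\dots,V_{r-1}$ of size $s\ge \log n'/(500\log(1/\phi'))$.

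To promote $K_{r-1}(s)$ to $K_r(t)$, exploit the min-degree: for every external vertex $u\in V(G')\setminus\bigcup V_i$,
\[
\sum_{i=1}^{r-1}\bigl(|V_i|-|N(u)\cap V_i|\bigr)\ \le\ \Bigl(\tfrac{1}{r-1}-\tfrac{\phi}{2}\Bigr)n'.
\]
Choosing $s$ to be a moderate constant multiple of the target $t$, a pigeonhole over pairs $(u,(T_1,\dots,T_{r-1}))$ with $T_i\subseteq V_i\cap N(u)$ and $|T_i|=t$ (counting $\sum_u \prod_i\binom{|V_i\cap N(u)|}{t}$ from below and comparing to the number of available $(r-1)$-tuples $(T_1,\dots,T_{r-1})$) produces one fixed tuple dominated by at least $t$ external vertices; these together with $T_1\cup\dots\cup T_{r-1}$ realize the desired $K_r(t)$.

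\textbf{Main obstacle.} The delicate point is propagating the quantitative bound across the induction. The cleaning step worsens $\phi$ to some $\phi'<\phi$ depending on $r$, and the promotion step from $K_{r-1}(s)$ to $K_r(t)$ forces $s$ to exceed $t$ by a constant factor (also depending on $r$); both effects shrink the constant in front of $\log n$. One must verify that the accumulated multiplicative losses across the $r-1$ induction levels are absorbed by the slack in the $1/500$ factor. The choice of $500$ is deliberately large precisely so that $\log(1/\phi)$ dominates the $r$-dependent and absolute constants introduced by the bookkeeping, which is the main technical input of the Chvátal--Szemerédi refinement over the original Erdős--Stone argument.
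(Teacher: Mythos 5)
First, note that the paper does not prove this statement at all: \autoref{es} is quoted from Chv\'atal and Szemer\'edi \cite{chvatal} and used as a black box, so there is no argument in the paper to compare with; judging your proposal on its own merits, the base case is fine but the inductive step has a genuine gap. The minimum-degree inequality you invoke for an external vertex $u$, namely $\sum_{i}\bigl(|V_i|-|N(u)\cap V_i|\bigr)\le\bigl(\tfrac{1}{r-1}-\tfrac{\phi}{2}\bigr)n'$, is vacuous: its left-hand side is at most $(r-1)s=O(\log n)$, which is already far smaller than the right-hand side, so it gives no lower bound whatsoever on any $|N(u)\cap V_i|$ --- a vertex of minimum degree may avoid the chosen copy of $K_{r-1}(s)$ entirely, and your lower bound on $\sum_u\prod_i\binom{|V_i\cap N(u)|}{t}$ collapses. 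The standard repair is to count non-edges between $\bigcup V_i$ and the exterior: each external vertex has at most $\bigl(\tfrac{1}{r-1}-\tfrac{\phi}{2}\bigr)n'$ non-neighbours, so the number of external vertices with fewer than $t$ neighbours in some class is at most $\tfrac{s}{s-t}\bigl(1-(r-1)\tfrac{\phi}{2}\bigr)n'$, and to retain a positive proportion of ``good'' external vertices you are forced to take $s$ of order $t/\bigl((r-1)\phi\bigr)$, not ``a moderate constant multiple of $t$''.

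Second, once this is fixed, the claim in your closing paragraph that the losses are absorbed by the $1/500$ factor is false rather than merely unverified. The pigeonhole then requires $t\binom{s}{t}^{r-1}$ to be smaller than roughly $\phi n'$, which caps $t$ at about $\frac{\log n}{(r-1)\log\frac{C}{(r-1)\phi}}$, a constant that degrades at least linearly in $r$, and these losses compound over the $r-2$ induction levels. But whenever the hypothesis of the theorem is non-vacuous one must have $\phi<\frac{1}{2(r-1)}$, so $\log(1/\phi)$ grows only like $\log r$; hence a loss of order $r$ (let alone a compounded one) cannot be hidden inside the denominator $500\log(1/\phi)$. Obtaining a constant independent of $r$, with dependence on $\phi$ only through $\log(1/\phi)$, is precisely the content of the Chv\'atal--Szemer\'edi refinement of Erd\H{o}s--Stone, and it is not delivered by a naive induction on $r$ of the kind you outline: as it stands your argument proves at best a bound of the form $t\ge c(r)\log n/\log(1/\phi)$ with $c(r)\to 0$, not the stated theorem.
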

	
Since $\delta(G)\ge n/2 +\eta n,$ the conditions of \autoref{es} are satisfied with $r=3$ and $\phi=\eta/2,$ hence, $G$ contains a balanced complete tripartite subgraph $T$
on $\Omega(\log n)$ vertices. Using Claim~\ref{HSzerk} and the 3-colorability of $H[A]$ this implies that $H[A]\subset T$.

Observe that after embedding $H[A]$ into $G$ every uncovered vertex of $G$ still has at least $\delta(G)-v(F)> (1/2+\eta/2)n$ uncovered neighbors. Denoting the subgraph of the uncovered vertices of $G$ by $G'$ we obtain that $\delta(G')>(1/2+\eta/2)n.$

In order to prove that $H[V-A]\subset G'$ we first need a definition.
\begin{definition}
	A graph $F$ on $n$ vertices is \emph{well-separable} if it has a subset $S\subset V(F)$ of size $o(n)$ such that all components of $F-S$ are of size $o(n)$. 
\end{definition}
	
We need the following theorem.

\begin{theorem}[\cite{wellsep}]
	For every $\gamma>0$ and positive integer $D$ there exists an $n_0$ such that for all $n>n_0$ if $F$ is a bipartite well-separable graph on $n$ vertices, $\Delta(F)\le D$ and $\delta(G)\ge \left(\frac{1}{2}+\gamma\right)n$ for a graph $G$ of order $n$, then $F\subset G$.
	\label{th_well}
\end{theorem}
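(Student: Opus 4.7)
The plan is to combine the Szemer\'edi Regularity Lemma with the Blow-up Lemma of Koml\'os, S\'ark\"ozy and Szemer\'edi, which is the standard framework for embedding nearly-spanning bounded-degree graphs into dense host graphs. The well-separability of $F$ will be exploited by embedding the small separator $S$ greedily first and then embedding the small components of $F-S$ using the Blow-up Lemma on super-regular pairs taken from a perfect matching of the reduced graph.

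First I would apply the Regularity Lemma to $G$ with parameters $\eps \ll d \ll \gamma, 1/D$, obtaining an equitable partition $V(G) = V_0 \cup V_1 \cup \cdots \cup V_m$ with $|V_0| \le \eps n$. The reduced graph $R$, having an edge whenever the corresponding pair is $\eps$-regular with density at least $d$, satisfies $\delta(R) \ge (1/2 + \gamma/2)m$ by the usual density computation. Dirac's theorem then produces a Hamilton cycle in $R$, hence a perfect matching $M$; after moving a negligible number of vertices from each cluster into $V_0$, every pair $\{V_i, V_{i'}\}$ corresponding to an edge of $M$ can be made $(\eps', d/2)$-super-regular.

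Next I would decompose $F$. Let $S \subset V(F)$ be the guaranteed separator of size $o(n)$, and let $C_1, \ldots, C_k$ be the components of $F - S$, each of size $o(n)$. I would distribute these components among the edges of $M$ so that each matching edge $\{V_i, V_{i'}\}$ is assigned components whose two color classes sum to $|V_i|$ and $|V_{i'}|$ respectively, up to a negligible error absorbable into $V_0$. Because each $|C_j| = o(n)$, this is essentially a bin-packing problem with tiny items, solvable by a first-fit or randomized procedure. I would then embed $S$ greedily (mostly into $V_0$ plus a few cluster vertices): each $v \in S$ has at most $D$ previously placed neighbors and at most $D$ future neighbors sitting in specified clusters, so a suitable image for $v$ must lie in the intersection of at most $2D$ prescribed neighborhoods; since $\delta(G) \ge (1/2 + \gamma)n$ and only $o(n)$ vertices have been used so far, each such intersection still has size $\Omega(n)$ and a greedy choice succeeds. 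Finally I would apply the Blow-up Lemma in its image-restriction form to each super-regular pair to embed the components assigned to it, respecting the positions of those component-vertices adjacent to the already-placed $S$.

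The main obstacle will be the interaction between the greedy embedding of $S$ and the subsequent Blow-up Lemma: one must verify that the number of image-restricted vertices in each cluster $V_i$ stays at most $o(|V_i|)$ and that every restricted candidate set has size at least $(d - o(1))|V_i|$, which are precisely the hypotheses of the Blow-up Lemma. A careful ordering of the vertices of $S$, together with an up-front balanced distribution of the components $C_j$ across the matching edges and the slack coming from the $(\gamma/2)n$ excess minimum degree, should suffice to guarantee both conditions.
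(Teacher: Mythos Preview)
The paper does not prove this theorem; it is quoted verbatim from~\cite{wellsep} and used as a black box in the proof of Theorem~\ref{fotetel}. So there is no ``paper's own proof'' here to compare your attempt against.

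That said, your outline is essentially the argument of~\cite{wellsep}: apply the Regularity Lemma, find a perfect matching in the reduced graph via the inherited minimum-degree condition, make the matching edges super-regular, pre-embed the separator $S$, and finish with the image-restriction form of the Blow-up Lemma on each super-regular pair. So strategically you are on target.

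A couple of points in your sketch would need tightening before it becomes a proof. First, the remark that $S$ is embedded ``mostly into $V_0$'' is backwards: $V_0$ carries no regularity structure and its vertices have no usable degree into the clusters, so the standard move is the opposite --- redistribute $V_0$ into the non-exceptional clusters (or simply discard it and pad with isolated vertices, since $|V_0|=o(n)$) and embed $S$ inside the clusters so that each $v\in S$ lands where its at most $D$ neighbours in $F-S$ can later be accommodated. Second, ``first-fit bin packing'' of the components $C_j$ onto matching edges does not by itself give the exact cluster-size balance a spanning embedding requires; one needs a more careful allocation (exploiting that each $C_j$ is bipartite and of size $o(n)$, so one can alternate which colour class goes to which side and correct the residual imbalance at the end). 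These are routine technicalities in the Regularity--Blow-up framework, but they are where the actual work in~\cite{wellsep} lies.
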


Since $H[V-A]$ has bounded size components, we can apply \autoref{th_well} for $H[V-A]$ and $G',$ with parameter $\gamma=\eta/2$. With this we finished proving what was desired.

\end{proof}


\section{Further tools for \autoref{konstans} }

When proving \autoref{fotetel}, we used the Regularity Lemma of Szemer\'edi, but implicitly, via the result on embedding well-separable graphs. When proving
\autoref{konstans} we will apply this very powerful result explicitly, hence, below we give a very brief introduction to the area. The interested
reader may consult with the original paper by Szemer\'edi~\cite{SZ} or e.g. with the survey paper~\cite{KS93}.

\subsection{Regularity Lemma}

The \textit{density}  between disjoint sets $X$ and $Y$ is defined as:
\[d(X,Y)={{e(X,Y)}\over     {|X||Y|}}.\]
We  will need  the following definition to state the Regularity Lemma.

\begin{definition}[Regularity condition] Let $\varepsilon >0$. A pair
  $(A,B)$      of     disjoint      vertex-sets     in      $G$     is
  \mbox{$\varepsilon$-regular}  if  for every  $X  \subset  A$ and  $Y
  \subset B$, satisfying
\[|X|>\varepsilon |A|,\ |Y|>\varepsilon |B|\]
we have
\[|d(X,Y)-d(A,B)|<\varepsilon.\]
\end{definition}
This  definition  implies  that   regular  pairs  are  highly  uniform
bipartite graphs; namely, the  density of any reasonably large subgraph
is almost the same as the density of the regular pair.

We will use the following form of the Regularity Lemma:

\begin{lemma}[Degree Form] \label{rl0} 
For every $\varepsilon>0$ there is an $M=M(\varepsilon)$ such
that if $G=(W,E)$ is any graph and $d\in [0,1]$ is any real number, then there is a partition of the
vertex set $V$ into $\ell+1$ clusters $W_0, W_1,\ldots,W_\ell$, and there is a subgraph $G'$ of $G$
with the following properties:  
\begin{itemize} 
\item $\ell \le M$, 
\item $|W_0|\le \varepsilon |W|$,
\item all clusters $W_i$, $i\ge 1$, are of the same size $m \ \left(\le \left\lfloor {|W|\over \ell}\right\rfloor<\varepsilon |W| \right)$, 
\item $deg_{G'}(v)>deg_{G}(v)-(d+\varepsilon)|W|$ for all $v\in W$, 
\item $G'|_{W_i}=\emptyset$ ($W_i$ is an independent set in $G'$) for all $i\ge 1$, 
\item all pairs
$(W_i,W_j)$, $1\le i <j \le \ell$, are $\varepsilon$-regular, each with density either 0 or greater
than $d$ in $G'$.  
\end{itemize} 
\end{lemma}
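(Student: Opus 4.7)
My plan is to derive the Degree Form from the original Szemer\'edi Regularity Lemma, which guarantees a partition into a bounded number of equal-sized clusters plus a small exceptional set so that all but an $\varepsilon$-fraction of the pairs are $\varepsilon$-regular, but says nothing about densities or individual degrees. The degree condition and the ``density 0 or $>d$'' dichotomy will be achieved by a cleaning step in which carefully chosen edges of $G$ are discarded to form $G'$.

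The first step is to apply the original Regularity Lemma with a parameter $\varepsilon'$ much smaller than $\varepsilon$ (say $\varepsilon' := \varepsilon^{2}/16$) and with a lower bound on the number of parts that makes $1/\ell'$ negligible compared to $\varepsilon$. This produces a partition $W_0', W_1', \dots, W_{\ell'}'$ with $|W_0'|\le \varepsilon'|W|$ and at most $\varepsilon'\binom{\ell'}{2}$ irregular pairs. The second step handles the fact that the irregular pairs are controlled only globally: call a cluster $W_i'$ \emph{bad} if more than $\sqrt{\varepsilon'}\ell'$ of the pairs it belongs to are irregular. By averaging, at most $2\sqrt{\varepsilon'}\ell'$ clusters are bad; move all their vertices into the exceptional set to form the final $W_0$, which still satisfies $|W_0|\le \varepsilon|W|$. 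Relabel the surviving clusters as $W_1,\dots,W_\ell$; they all have the same size $m$. Finally, define $G'$ by deleting from $G$ every edge which is either (i) inside some $W_i$ with $i\ge 1$, (ii) incident to $W_0$, (iii) lying in an irregular pair $(W_i,W_j)$, or (iv) lying in a regular pair of density at most $d$.

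The third step is to verify the degree condition for an arbitrary $v\in W_i$, $i\ge1$. The edges lost at $v$ can be bounded by summing four contributions: at most $m-1<|W|/\ell$ from inside $W_i$, at most $|W_0|\le\varepsilon|W|$ from $W_0$, at most $\sqrt{\varepsilon'}\ell'\cdot m\le\sqrt{\varepsilon'}|W|$ from irregular pairs (this is where the bad-cluster removal is essential), and at most $d\cdot m\cdot(\ell-1)\le d|W|$ from low-density pairs. Choosing $\varepsilon'$ small enough so that $1/\ell + \sqrt{\varepsilon'} + \varepsilon' \le \varepsilon$ yields $\deg_{G'}(v)>\deg_G(v)-(d+\varepsilon)|W|$, and $M:=M(\varepsilon)$ is given by the bound on $\ell'$ from the original Regularity Lemma.

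The only genuine difficulty is the irregular-pair step, since a single cluster could a priori be involved in up to $\ell-1$ irregular pairs, which would ruin any per-vertex degree bound. The standard trick of absorbing the (few) clusters exceeding a $\sqrt{\varepsilon'}$-threshold into $W_0$ bypasses this obstacle at the cost of slightly enlarging the exceptional set; the remaining bookkeeping is routine manipulation of the parameters $\varepsilon$, $\varepsilon'$, $\ell$, and $m$.
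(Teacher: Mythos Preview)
The paper does not prove this lemma; it quotes the Degree Form of the Regularity Lemma as a standard tool and refers the reader to~\cite{SZ} and~\cite{KS93}. So there is no in-paper argument to compare against, and your derivation from the original Regularity Lemma is exactly the intended route. In outline it is correct, but two of the four ``cleaning'' steps do not go through as written.

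First, deleting every edge incident to $W_0$ is too aggressive. The degree condition is required \emph{for all} $v\in W$, including $v\in W_0$; if such a $v$ has $\deg_G(v)$ close to $|W|$ and you remove all of its edges, you get $\deg_{G'}(v)=0<\deg_G(v)-(d+\varepsilon)|W|$ whenever $d+\varepsilon<1$. None of the conclusions of the lemma constrain edges meeting $W_0$, so the correct construction simply keeps them; the degree condition for $v\in W_0$ is then automatic, and for $v\in W_i$ ($i\ge 1$) the ``$|W_0|$'' term disappears from your loss count.

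Second, the bound ``$d\cdot m\cdot(\ell-1)$ from low-density pairs'' is not a per-vertex bound. If $(W_i,W_j)$ is $\varepsilon'$-regular with density at most $d$, then $dm$ is only the \emph{average} degree from $W_i$ into $W_j$; a particular $v\in W_i$ may have up to $m$ neighbours there (regularity only tells you that at most $\varepsilon'm$ vertices of $W_i$ exceed $(d+\varepsilon')m$ neighbours in $W_j$). Thus a single vertex could in principle lose close to $|W|$ edges to sparse pairs, not $d|W|$. The standard repair mirrors your bad-cluster trick one level down: call $v\in W_i$ \emph{bad} if it has more than $(d+\varepsilon')m$ neighbours in $W_j$ for more than $\sqrt{\varepsilon'}\,\ell$ indices $j$; a counting argument gives at most $\sqrt{\varepsilon'}\,m$ bad vertices per cluster, and these (together with a few extra vertices to re-equalise cluster sizes) are absorbed into $W_0$. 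For every surviving vertex the sparse-pair loss is then at most $(d+\varepsilon')m\ell+\sqrt{\varepsilon'}\,\ell m$, and with $\varepsilon'$ chosen small enough your final inequality $\deg_{G'}(v)>\deg_G(v)-(d+\varepsilon)|W|$ follows.
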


\noindent We  call  $W_0$  the  \textit{exceptional  cluster}, $W_1, \ldots, W_{\ell}$ are the \textit{non-exceptional clusters}.
In the rest of the paper we will assume that $0< \varepsilon\ll  d \ll  1$. Here $a\ll b$ means that \textit{$a$ is sufficiently smaller than $b$.}

\begin{definition}[Reduced graph]
Apply \autoref{rl0} to the graph $G=(W,E)$ with parameters $\varepsilon$ and $d$, 
and denote the clusters of the resulting partition by $W_0, W_1,\ldots, W_\ell$ ($W_0$ being the exceptional cluster). We construct a new graph $G_r$, the reduced graph of $G'$ in
the following way: The non-exceptional clusters of $G'$
are the vertices of the reduced graph $G_r$ (hence $v(G_r)=\ell$). We connect two vertices of
$G_r$ by an edge if the corresponding two clusters form an $\varepsilon$-regular
pair with density at least $d$.
\end{definition}

The following corollary is immediate:

\begin{corollary}\label{redukaltfok}
Apply \autoref{rl0} with parameters $\varepsilon$ and $d$ to the graph $G=(W,E)$ satisfying 
$\delta(G) \ge \gamma n$ $\ (v(G)=n)$ for some $\gamma>0$. 
Denote $G_r$ the reduced graph of $G'$. Then $\delta(G_r) \ge (\gamma - \theta)\ell$,
where $\theta=2\varepsilon+d$.
\end{corollary}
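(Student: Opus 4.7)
The plan is to fix an arbitrary non-exceptional cluster $W_i$, pick any vertex $v \in W_i$, and track where its neighbors can live in the refined subgraph $G'$, then translate this into a lower bound on the cluster-degree in $G_r$.

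First I would apply the degree condition of \autoref{rl0} to $v$: since $\deg_G(v) \ge \gamma n$, the guarantee $\deg_{G'}(v) > \deg_G(v) - (d+\varepsilon)n$ gives $\deg_{G'}(v) > (\gamma - d - \varepsilon)n$. Next I would account for the "bad" locations: the exceptional cluster $W_0$ contributes at most $\varepsilon n$ neighbors, and $W_i$ itself contributes none because $G'|_{W_i}$ is empty. Subtracting, at least $(\gamma - d - 2\varepsilon)n$ neighbors of $v$ in $G'$ lie in clusters $W_j$ with $j \neq 0, i$.

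Then I would convert this vertex-level count into a cluster-level count. Each cluster $W_j$ has size $m \le n/\ell$, so the number of distinct indices $j \in \{1,\ldots,\ell\}\setminus\{i\}$ for which $v$ has at least one neighbor in $W_j$ is at least
\[
\frac{(\gamma - d - 2\varepsilon)n}{m} \;\ge\; (\gamma - d - 2\varepsilon)\ell \;=\; (\gamma - \theta)\ell.
\]
For each such index $j$, the pair $(W_i, W_j)$ has positive density in $G'$; by the dichotomy in \autoref{rl0}, its density is then greater than $d$, so $W_iW_j$ is an edge of $G_r$. Hence $\deg_{G_r}(W_i) \ge (\gamma - \theta)\ell$, which is the desired bound.

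There is no substantive obstacle here; the only thing worth being careful about is not double-counting the losses (the $\varepsilon n$ from $W_0$ and the $(d+\varepsilon)n$ from passing to $G'$ combine to give $\theta = 2\varepsilon + d$, not $\varepsilon + d$), and invoking the "density $0$ or $>d$" clause correctly to turn "has a neighbor in $W_j$" into "$W_iW_j \in E(G_r)$."
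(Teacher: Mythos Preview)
Your proof is correct and is precisely the standard argument one has in mind when calling this corollary ``immediate'' (as the paper does, giving no proof). The key steps---using the degree bound in $G'$, subtracting $|W_0|\le\varepsilon n$, and invoking the density dichotomy to convert a single $G'$-edge into a $G_r$-edge---are exactly right.
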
 

The (fairly easy) proof of the lemma below can be found in~\cite{KS93}.

\begin{lemma} \label{felezo}
Let $(A,B)$ be an \mbox{$\varepsilon$-regular}--pair with density $d$ for some $\epsilon >0$. Let $c>0$ be a constant such that $\varepsilon\ll c$.
We arbitrarily divide $A$ and $B$ into two parts, obtaining the non-empty subsets $A',A''$ and $B',B''$, respectively. 
Assume that $|A'|, |A''|\ge c|A|$ and $|B'|, |B''|\ge c|B|$. Then the 
pairs $(A',B'),$ $(A', B''),$ $(A'', B')$ and $(A'',B'')$ are all $\varepsilon/c$--regular pairs with density at least $d-\varepsilon/c$.
\end{lemma}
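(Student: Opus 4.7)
The plan is to derive the conclusion directly from the definition of $\varepsilon$-regularity, treating each of the four sub-pairs separately; by symmetry it suffices to handle $(A',B')$. Before anything, I would record the (forced) observation that $c \le 1/2$: since $A'$ and $A''$ partition $A$ and each has size at least $c|A|$, we must have $2c \le 1$, and in particular $\varepsilon/c \ge 2\varepsilon$. This little fact is what makes the triangle-inequality step below actually produce the claimed bound.

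Next I would establish that each sub-pair's density stays close to $d$. Since $\varepsilon \ll c$, we have $|A'| \ge c|A| > \varepsilon|A|$ and $|B'| \ge c|B| > \varepsilon|B|$, so $A'$ and $B'$ are ``large enough'' to be tested against the regularity condition of the original pair $(A,B)$. Applying it directly gives
\[
|d(A',B') - d(A,B)| < \varepsilon,
\]
hence $d(A',B') > d - \varepsilon \ge d - \varepsilon/c$, which is the density statement.

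For the regularity of $(A',B')$, I would pick arbitrary $X \subseteq A'$ and $Y \subseteq B'$ with $|X| > (\varepsilon/c)|A'|$ and $|Y| > (\varepsilon/c)|B'|$, and then chase sizes. Since $|A'| \ge c|A|$ and $|B'| \ge c|B|$, we obtain
\[
|X| > \tfrac{\varepsilon}{c}\cdot c|A| = \varepsilon |A|, \qquad |Y| > \tfrac{\varepsilon}{c}\cdot c|B| = \varepsilon |B|,
\]
so $X,Y$ qualify as test sets for $(A,B)$, yielding $|d(X,Y) - d(A,B)| < \varepsilon$. Combining this with the previous density bound via the triangle inequality,
\[
|d(X,Y) - d(A',B')| \le |d(X,Y) - d(A,B)| + |d(A,B) - d(A',B')| < 2\varepsilon \le \varepsilon/c,
\]
which is precisely $\varepsilon/c$-regularity. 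The same argument applied verbatim to $(A',B'')$, $(A'',B')$, $(A'',B'')$ completes the proof.

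There is no real obstacle here — the whole argument is just the definition of $\varepsilon$-regularity plus a triangle inequality. The one spot that could go wrong is the final ``$2\varepsilon \le \varepsilon/c$'' step, but this is exactly where the structural bound $c \le 1/2$ enters, so as long as that observation is made explicit at the start the proof is routine.
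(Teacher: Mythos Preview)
Your argument is correct and is exactly the standard derivation from the definition of $\varepsilon$-regularity. The paper does not actually supply its own proof of this lemma; it merely remarks that the (easy) proof can be found in the Koml\'os--Simonovits survey~\cite{KS93}, and the proof there is precisely the one you wrote: test $A',B'$ against the original pair to control the density, then lift test sets $X,Y$ back up to $(A,B)$ and combine via the triangle inequality. Your explicit observation that the partition forces $c\le 1/2$, and hence $2\varepsilon\le\varepsilon/c$, is the one detail that makes the final inequality go through, and you handled it cleanly.
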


\subsection{Blow-up Lemma}

Let $H$ and $G$ be two graphs on $n$ vertices. Assume that we want to find an isomorphic copy of $H$ in $G$.
In order to achieve this one can apply a very powerful tool, the Blow-up Lemma of Koml\'os, S\'ark\"ozy and 
Szemer\'edi~\cite{KSSz97,KSSz98}.
For stating it we need a new notion, a stronger one-sided property of regular pairs.

\begin{definition}[Super-Regularity condition] Given a graph $G$ and
  two disjoint subsets  of its vertices $A$ and  $B$, the pair $(A,B)$
  is   \mbox{$(\varepsilon, \delta)${\rm-super-regular}},   if   it   is
  $\varepsilon$-regular and furthermore,
$$deg(a)>\delta |B|,{\rm \ for\  all}\  a\in A,$$
and
$$deg(b)>\delta |A|,{\rm \ for\  all}\  b\in B.$$
\end{definition}

\begin{theorem}[Blow-up Lemma]\label{blow-up}
Given a graph $R$ of order $r$ and positive integers $\delta, \Delta,$ there exists
a positive $\eps=\eps(\delta, \Delta, r)$
such that the following holds: 
Let $n_1,n_2,\ldots,n_r$ 
be arbitrary positive 
parameters and let
us replace the vertices $v_1,v_2,\ldots,v_r$ of $R$ with pairwise disjoint sets $W_1,W_2, \ldots, W_r$ of sizes 
$n_1,n_2,\ldots,n_r$ (blowing up $R$). 
We construct two graphs on the same vertex set $V=\cup_i W_i$.  The first graph $F$ is obtained by replacing each edge 
$v_iv_j \in E(R)$ with the complete bipartite graph between $W_i$ and $W_j$. A sparser graph $G$ is constructed by replacing each edge 
$v_iv_j$ arbitrarily with an $(\eps, \delta)$-super-regular pair between $W_i$ and $W_j$. If a graph $H$ with $\Delta(H)\le \Delta$ is embeddable
into $F$ then it is already embeddable into $G$.
\end{theorem}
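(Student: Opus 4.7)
The plan is to construct an embedding $\psi \colon V(H) \to V(G)$ via a randomized greedy procedure followed by a matching-based endgame. The hypothesis supplies an embedding $f \colon V(H) \to V(F)$ telling us which cluster $W_i$ each vertex of $H$ must land in; our job is to lift it to the sparser graph $G$. Choose $\eps$ sufficiently small relative to $\delta$, $\Delta$, and $1/r$. For each cluster $W_i$ I would first reserve a small \emph{buffer} $B_i \subset f^{-1}(W_i)$ with $|B_i| \approx \alpha n_i$ for some tiny $\alpha$, arranged so that the vertices of $B := \bigcup_i B_i$ form an independent set in $H$ (possible by a greedy selection, since $\Delta(H)\le \Delta$). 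Then linearly order the non-buffer vertices of $H$ (for instance, by a breadth-first traversal of the reduced graph $R$) and process them one at a time, embedding each $v$ as a uniformly random vertex of its current \emph{candidate set} $C(v)\subset W_{f(v)}$: the still-free vertices of the target cluster that are adjacent, via the super-regular pairs of $G$, to every already-embedded neighbor of $v$.

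The heart of the argument is showing that the candidate sets stay comfortably large throughout this greedy phase. Since $\Delta(H)\le \Delta$, every $v$ has at most $\Delta$ previously embedded neighbors, each constraining $C(v)$ via a super-regular pair of density at least $\delta$. Iterating the $\eps$-regularity condition across these at most $\Delta$ constraints shows that, for all but an $O(\eps)|W_{f(v)}|$ exceptional set of vertices of $W_{f(v)}$, the typical size of $C(v)$ is at least $(\delta/2)^{\Delta}\,|W_{f(v)}|$. One then needs to verify that the random choices do not prematurely starve any $C(v)$; this follows from a Chernoff or Azuma-type concentration inequality, exploiting that only a bounded fraction of any cluster is ever exposed as an image and that the greedy order keeps the "information" each step reveals small.

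The subtle point, and the main obstacle, is the endgame once only the buffer vertices $B$ remain. Each $b\in B$ has a candidate set $C(b)$ of linear size, but we still need a simultaneous valid assignment, i.e., a system of distinct representatives. Because $B$ is independent in $H$, the constraints decouple cluster by cluster: within each $W_i$ it suffices to exhibit a perfect matching in the bipartite \emph{candidacy graph} between $B_i$ and the still-free vertices of $W_i$. To verify Hall's condition, small subsets $S\subset B_i$ are handled because each individual $b\in S$ retains $\Omega(|W_i|)$ candidates, while for larger $S$ one appeals to the super-regular lower-degree guarantee — every free vertex in $W_i$ has many neighbors in every not-too-small subset of any adjacent cluster — to conclude that the neighborhood of $S$ exhausts almost all free vertices in $W_i$. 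A cleaner execution uses a two-round exposure: a first round greedily embeds non-buffer vertices while leaving sufficient "freedom" for the buffer, and a second round invokes a König/Hall matching to finish. Making this endgame go through uniformly across all clusters, and absorbing the few atypical vertices that escape the greedy phase, is the technical crux, which is why the authors simply cite the Blow-up Lemma rather than reprove it.
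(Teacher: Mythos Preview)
The paper does not prove this theorem at all: it is quoted as a black box from Koml\'os, S\'ark\"ozy and Szemer\'edi~\cite{KSSz97,KSSz98} and then applied in the second phase of the embedding algorithm. Your sketch is a faithful outline of the original randomized-greedy-plus-matching argument from those papers (buffer set, iterated regularity to control candidate sets, Hall-type endgame), and you even anticipate in your final sentence that the authors merely cite the result --- which is exactly what happens. No proof is expected here.
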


\section{Proof of \autoref{konstans}}

Let us give a brief sketch first.
Recall, that $\pi$ is a $q$-unbalanced and bounded degree sequence with $\Delta(\pi)\le D$. In the proof we first show that there exists a $q$-unbalanced
bipartite graph $H$  that realizes $\pi$ such that $H$ is the vertex disjoint union of the graphs $H_1, \ldots, H_k,$ where each $H_i$ graph is a bipartite $q$-unbalanced graph
having bounded size.
We will apply the Regularity lemma to $G,$ and find a special substructure (a decomposition into vertex-disjoint stars) in the reduced graph of $G$. This substructure can then be used to embed 
the union of the $H_i$ graphs, for the majority of them we use the Blow-up lemma.

\subsection{Finding $H$}

The goal of this subsection is to prove the lemma below.
 
{
\begin{lemma}
	Let $\pi$ be a $q$-unbalanced degree sequence of positive integers with $\Delta(\pi)\le D$. Then $\pi$ can be realized by a $q$-unbalanced bipartite graph $H$ which is the vertex disjoint union of the graphs $H_1, \ldots, H_k,$ such that for every $i$ we have that $H_i$ is $q$-unbalanced, moreover, $v(H_i)\le 4D^2$.
	\label{th2}
\end{lemma}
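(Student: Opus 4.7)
Since $\pi$ is $q$-unbalanced, fix any $q$-unbalanced bipartite realization of $\pi$, say $H^*=(S^*,T^*)$ with $s=|S^*|$ and $t=|T^*|\ge qs$. Let $a_1\ge\cdots\ge a_s$ be the $S^*$-degrees and $b_1\ge\cdots\ge b_t$ the $T^*$-degrees, and write $e=\sum a_i=\sum b_j$. Since every $b_j\ge 1$, we have the amortised inequality
\[
\sum_i a_i \;=\; e\;\ge\; t\;\ge\;qs,
\]
so the average $S^*$-degree is at least $q$. Moreover, any block $(A_j,B_j)$ that admits a $q$-unbalanced bipartite realization must itself obey $\sum_{u\in A_j}\pi(u)\ge q|A_j|$, because the $|B_j|\ge q|A_j|$ vertices of $B_j$ each contribute at least one to $\sum_{v\in B_j}\pi(v)=\sum_{u\in A_j}\pi(u)$.

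The plan is to greedily partition $S^*=A_1\sqcup\cdots\sqcup A_k$ and $T^*=B_1\sqcup\cdots\sqcup B_k$ so that every block $(A_j,B_j)$ satisfies (i) $\sum_{A_j}\pi=\sum_{B_j}\pi$, (ii) $|B_j|\ge q|A_j|$, (iii) the bipartite sequence is bigraphic, and (iv) $|A_j|+|B_j|\le 4D^2$; then each block is realized as a bipartite graph $H_j$ by the Gale--Ryser theorem, and we set $H=\bigsqcup_j H_j$. To build a block I maintain running sums $\sigma_A,\sigma_B$ and proceed in rounds: add one unused $S^*$-vertex to $A$, choosing it so that $\sigma_A/|A|\ge q$ is preserved (any light vertex with $\pi(u)<q$ is paired with a heavy vertex $\pi(u')\ge q$ from the still-ample reservoir); then import $T^*$-vertices into $B$, preferring the smallest degrees available, until $\sigma_B\ge\sigma_A$; and finally swap in a single small-degree $T^*$-vertex to achieve $\sigma_A=\sigma_B$ exactly, which is possible because every degree is at most $D$. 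The block is closed once (i)--(iv) hold. The size bound in (iv) follows from $\sigma_A\ge q|A|$ and $|B|\le\sigma_A\le D|A|$: if the algorithm keeps $|A_j|\le 2D$ (easily arranged), then $|A_j|+|B_j|\le 2D+2D^2\le 4D^2$.

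The main obstacle is condition (ii). Locally, a freshly added light $S^*$-vertex can temporarily push $\sigma_A/|A|$ below $q$, in which case closing the block would force $|B_j|<q|A_j|$ and violate $q$-unbalance. The amortised inequality resolves this: after closing any block satisfying $\sigma_A\ge q|A|$, the unused portion of $S^*$ still has average degree at least $q$, and in particular contains at least one $S^*$-vertex of degree $\ge q$ to serve as a heavy partner. Condition (i) is handled by the bounded-degree fine-tuning of the $T^*$-side described above, and condition (iii) is immediate because the blocks produced are small and essentially consist of stars $K_{1,d}$ (with $d\ge q$, hence already $q$-unbalanced) plus a few small corrections accommodating $T^*$-vertices of degree greater than $1$; for such small structured sequences Gale--Ryser is straightforward to verify. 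Setting $H=\bigsqcup_j H_j$ then produces the realization promised by the lemma.
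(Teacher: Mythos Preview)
Your greedy scheme has a genuine gap at condition~(ii). Closing a block as soon as (i)--(iv) hold does not preserve the $q$-unbalance of what remains: by always pulling the smallest-degree $T^*$-vertices into the current block you can exhaust them, leaving a later block with too few $T^*$-vertices. Concretely, take $q=2$, $D=3$, $S^*=\{u_1,u_2\}$ with $\pi(u_1)=3$, $\pi(u_2)=2$ and $T^*=\{v_1,v_2,v_3,v_4\}$ with $\pi(v_1)=2$, $\pi(v_2)=\pi(v_3)=\pi(v_4)=1$ (a valid $2$-unbalanced bigraphic sequence). Your first round puts $u_1$ into $A_1$ and the three degree-$1$ vertices into $B_1$; all of (i)--(iv) hold, so you close. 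Now only $u_2$ and $v_1$ remain, and $|B_2|=1<q|A_2|=2$. Your amortisation (``the unused portion of $S^*$ still has average degree $\ge q$'') speaks only about the $S^*$-side; it says nothing about $|T^*_{\text{rem}}|\ge q|S^*_{\text{rem}}|$, which is what (ii) actually needs and which fails here. (The amortisation claim itself is fragile: if a closed block has $\sigma_A>q|A|$, the remaining $S^*$-average can drop below $q$.) The ``swap in a single small-degree $T^*$-vertex'' step for~(i) is likewise unjustified---after adding smallest-first, every unused $T^*$-vertex has degree at least that of the last one added, so no smaller swap-in exists---and the hand-wave for~(iii) ignores cases such as $|A_j|=1$ with $\pi(u)>|B_j|$, which are not bigraphic.

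The paper's route avoids all three difficulties simultaneously. It first partitions $S\cup T$ into tuples $(s;t_1,\dots,t_h)$ with $s\in S$, $t_i\in T$ and $q\le h\le D$, so that $q$-unbalance is hard-wired into every building block and hence into any union of them. Exact balance is then obtained not by ad hoc swaps but by a zero-sum argument: the tuple biases $\zeta=\sum_i\pi(t_i)-\pi(s)\in\{-D,\dots,D^2\}$ form a zero-sum sequence, and a theorem of Sahs--Sissokho--Torf partitions any such sequence into zero-sum pieces of length at most $2D^2$. Finally, realizability is secured by \emph{merging} small pieces so that each side has at least $2D^2$ vertices and then applying a Gale--Ryser-type criterion valid in that regime---the opposite of your strategy of keeping blocks as small as possible.
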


Before starting the proof of~\autoref{th2}, we list a few necessary notions and results.

We call a finite sequence of integers a \textit{zero-sum sequence} if the sum of its elements is zero. 
The following result of Sahs, Sissokho and Torf plays an important role in the proof of \autoref{th2}. 

\begin{proposition}\cite{sissokho}
	Assume that $K$ is a positive integer. Then any zero-sum sequence on $\{-K,\ldots, K\}$ having length at least $2K$ contains a proper nonempty zero-sum subsequence.
	\label{th_sissokho}
\end{proposition}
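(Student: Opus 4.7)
The plan is to establish the proposition by a reordering-plus-pigeonhole argument applied to the partial sums of the sequence. Let $a_1, a_2, \ldots, a_n$ be a zero-sum sequence on $\{-K, \ldots, K\}$ with $n \geq 2K$. If any term equals $0$, the singleton subsequence consisting of that term is already a proper nonempty zero-sum subsequence (since $n \geq 2$), so I assume throughout that every $a_i$ is nonzero, and separate the indices into a positive part and a negative part.

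My first step is to reorder the sequence so that the partial sums stay bounded. I would build the permutation $\sigma$ greedily. Starting from $s_0 = 0$, after having chosen $\sigma(1), \ldots, \sigma(j)$ with partial sum $s_j$, I pick the next index as follows: if $s_j > 0$, append an unused negative term; if $s_j < 0$, append an unused positive term; and if $s_j = 0$, append anything still unused. Such a choice always exists because the unused terms sum to $-s_j$, so whenever $s_j > 0$ the unused part has negative total and must contain a negative term (and symmetrically in the other case). Since a single step changes the partial sum by at most $K$, a short induction shows that $|s_j| \leq K$ for every $j$.

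Once the sequence has been reordered this way, the $n+1$ partial sums $s_0, s_1, \ldots, s_n$ all lie in the $(2K{+}1)$-element set $\{-K, \ldots, K\}$. By the pigeonhole principle two of them coincide; note also that $s_0 = s_n = 0$, so to extract a proper (neither empty nor everything) subsequence I need a coincidence \emph{beyond} that forced one. First, if some interior $s_i = 0$ with $1 \leq i \leq n-1$, then $a_{\sigma(1)}, \ldots, a_{\sigma(i)}$ is the desired proper nonempty zero-sum subsequence of the original sequence. Otherwise the interior partial sums $s_1, \ldots, s_{n-1}$ lie in the $2K$-element set $\{-K, \ldots, K\} \setminus \{0\}$, and the length hypothesis forces two indices $1 \leq i < j \leq n-1$ with $s_i = s_j$, at which point $a_{\sigma(i+1)}, \ldots, a_{\sigma(j)}$ is a proper nonempty zero-sum subsequence.

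The main hurdle I expect is matching the pigeonhole count tightly to the hypothesis $n \geq 2K$: the naive counting above forces a nontrivial collision only once $n-1$ strictly exceeds $2K$, so the boundary case must be sharpened. One likely remedy is to choose the first term of the reordering deliberately---for instance, starting with an element of maximal absolute value---so that, together with the constraints $s_0 = s_n = 0$ and $|s_j| \leq K$, the partial sums are forced into a set of effective size strictly smaller than $2K+1$, squeezing out the extra coincidence needed. The rest of the argument is then a routine verification that the coincidence yields a subsequence that is both nonempty and proper.
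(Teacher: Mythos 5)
The paper itself offers no proof of this proposition: it is imported verbatim from Sahs--Sissokho--Torf \cite{sissokho}, so the only question is whether your blind argument actually establishes it, and it does not quite. The greedy reordering keeping every partial sum in $\{-K,\ldots,K\}$ is correct, but the pigeonhole count you then run needs $n-1>2K$, i.e.\ $n\ge 2K+2$, while the hypothesis is only $n\ge 2K$ --- a gap you acknowledge but do not close. The ``likely remedy'' (start with a term of maximal absolute value and squeeze the effective range) is not carried out, and it cannot be the routine verification you hope for: at the exact boundary $n=2K$ the statement is genuinely delicate. Indeed, for $K=1$ the sequence $(1,-1)$ is zero-sum, has length $2K$, and contains no proper nonempty zero-sum subsequence, so no reordering-plus-pigeonhole squeeze can handle $n=2K$ in the stated generality (the quoted proposition implicitly needs $K\ge 2$); and for $K\ge 2$ the case $n=2K$ is essentially the content of the cited theorem, whose proof requires a finer structural analysis of minimal zero-sum sequences, not just bounded partial sums.

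A sharpening of your own construction is worth recording, since it shows how far the method does reach. Under your greedy rule, whenever $s_{j-1}\neq 0$ the next sum satisfies $|s_j|\le K-1$ (a positive $s_{j-1}\le K$ is followed by a term in $\{-K,\ldots,-1\}$, and symmetrically). Hence if no interior partial sum vanishes, the sums $s_2,\ldots,s_{n-1}$ lie in the $(2K-2)$-element set $\{\pm 1,\ldots,\pm(K-1)\}$, and a collision is already forced once $n-2>2K-2$, i.e.\ $n\ge 2K+1$. So your approach yields a clean elementary proof with $2K$ replaced by $2K+1$, which would in fact suffice for the way the proposition is used in Lemma~\ref{th2} (only the constant $4D^2$ would change). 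But as a proof of the proposition as stated, the argument is incomplete, and the missing boundary case is exactly where the real difficulty of \cite{sissokho} lies.
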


The following result, formulated by Gale \cite{gale} and Ryser \cite{ryser}, will also be useful. We present it in the form as discussed in Lov\'asz \cite{lovasz}.

\begin{lemma}\cite{lovasz}
	\label{lovlemma}
	Let $G=(A,B;E(G))$ be a bipartite graph and $f$ be a nonnegative integer function on $A\cup B$ with $f(A) = f(B)$. Then $G$ has a subgraph $F=(A,B;E(F))$ such that $d_F(x)=f(x)$ for all 
          $x\in A\cup B$ if and only if 
	\begin{equation}
		f(X)\leq e(X,Y) + f(\overline Y)
		\label{loveq}
	\end{equation}
	for any $X\subseteq A$ and $Y\subseteq B$, where $\overline Y = B - Y$.
\end{lemma}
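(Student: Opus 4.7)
The plan is to prove the two directions separately. Necessity is immediate from edge-counting: if $F\subseteq G$ realizes $f$, then for any $X\subseteq A$ and $Y\subseteq B$, every $F$-edge incident to $X$ lands either in $Y$ (accounted for by $e_F(X,Y)\le e(X,Y)$) or in $\overline{Y}$ (accounted for by $\sum_{y\in\overline{Y}}d_F(y)=f(\overline{Y})$), giving $f(X)=\sum_{x\in X}d_F(x)\le e(X,Y)+f(\overline{Y})$.

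For sufficiency, I would reformulate the $f$-factor problem as an integer max-flow problem on an auxiliary digraph $N$. Introduce a source $s$ and a sink $t$; add arcs $s\to a$ of capacity $f(a)$ for each $a\in A$, arcs $b\to t$ of capacity $f(b)$ for each $b\in B$, and a unit-capacity arc $a\to b$ for every $ab\in E(G)$. Integer $s$--$t$ flows of value $f(A)$ in $N$ are in bijection with subgraphs $F\subseteq G$ satisfying $d_F(x)=f(x)$ for all $x$: the flow on the arc $a\to b$ (which is $0$ or $1$) indicates whether $ab\in E(F)$, and the balance conditions at $a\in A$ and $b\in B$ together with the capacity bounds on the $s$- and $t$-arcs force the degree sequence to be exactly $f$. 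Hence it suffices to prove that the max flow equals $f(A)$; integrality of max flow (all capacities being integers) then automatically produces the required integer $F$.

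By max-flow/min-cut, the max flow equals $f(A)$ iff every $s$--$t$ cut has capacity at least $f(A)$. I would parameterize a cut by a pair $(S,T)$ with $S\subseteq A$ and $T\subseteq B$ kept on the source side. A direct calculation gives its capacity as $f(A)-f(S)+f(T)+e(S,B\setminus T)$: the $s$-arcs contribute $f(A\setminus S)$, the $t$-arcs contribute $f(T)$, and the internal arcs contribute $e(S,B\setminus T)$ since only arcs $a\to b$ with $a\in S$, $b\notin T$ cross the cut. Requiring this to be $\ge f(A)$ rearranges to $f(S)\le e(S,B\setminus T)+f(T)$, which, via the substitution $X:=S$, $Y:=B\setminus T$ (so $\overline{Y}=T$), is exactly the hypothesis~\eqref{loveq}. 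Hence every cut is large enough, max flow equals $f(A)$, and the $F$-factor exists.

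The main obstacle is purely bookkeeping: keeping the sides of the cut, the complementation $\overline{Y}=B\setminus Y$, and the direction of the inequality aligned so that the somewhat asymmetric hypothesis (with $Y$ and $\overline{Y}$ playing different roles) emerges naturally from the cut formula. A flow-free alternative would be to blow up each $v\in A\cup B$ into $f(v)$ clones and replace each edge of $G$ by the complete bipartite graph between the clone classes, then apply the defect form of Hall's theorem to produce a perfect matching that descends to the required $F$; this trades max-flow/min-cut for an equivalent translation of \eqref{loveq} into Hall's deficiency condition.
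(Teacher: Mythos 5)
Your proof is correct. Note, however, that the paper does not prove this lemma at all: it is quoted from Lov\'asz's book (going back to Gale and Ryser), so there is no internal proof to compare against. Your argument is the classical one: necessity by counting the $F$-edges leaving $X$ according to whether they end in $Y$ or $\overline{Y}$, and sufficiency by encoding the $f$-factor problem as an integral max-flow problem with source arcs of capacity $f(a)$, unit arcs along $E(G)$, and sink arcs of capacity $f(b)$. Your cut computation is right: a cut with source side $\{s\}\cup S\cup T$ has capacity $f(A)-f(S)+e(S,B\setminus T)+f(T)$, and requiring this to be at least $f(A)$ is exactly \eqref{loveq} under the substitution $X=S$, $Y=B\setminus T$; since a flow of value $f(A)=f(B)$ saturates all source and sink arcs, the integral max flow yields a subgraph with degrees exactly $f$. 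This is essentially Gale's original network-flow derivation, and your remark that one could instead split each vertex $v$ into $f(v)$ clones and invoke the defect form of Hall's theorem is an equally valid, equivalent alternative.
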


We remark that such a subgraph $F$ is also called an $f$-factor of $G$.

\begin{lemma}
	If $f = (a_1,\ldots,a_s;b_1,\ldots,b_{t})$ is a sequence of positive integers with $s,t\ge 2\Delta^2$, where $\Delta$ is the maximum of $f$, and $f(A) = f(B)$ with $A = \{a_1,\ldots,a_s\}$ and $B = \{b_1,\ldots,b_t\}$ then $f$ is bigraphic.
	\label{stetel}
\end{lemma}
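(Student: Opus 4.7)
The plan is to apply Lemma \ref{lovlemma} to the host graph $G = K_{s,t}$, for which $e(X,Y) = |X||Y|$. So it suffices to verify that
\[
f(X) \leq |X||Y| + f(B \setminus Y)
\]
for every $X \subseteq A$ and $Y \subseteq B$. Setting $x = |X|$, $y = |Y|$, I would argue by a short case analysis on the sizes of $X$ and $Y$ relative to $\Delta$.

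In the first case, if $y \geq \Delta$, then $f(X) \leq \Delta x \leq xy$, and the inequality follows immediately since $f(B \setminus Y) \geq 0$. Symmetrically, in the second case, if $x \geq \Delta$, I use the balance condition $f(A) = f(B)$ to write $f(X) - f(B \setminus Y) \leq f(A) - f(B \setminus Y) = f(Y) \leq \Delta y \leq xy$, so again we are done.

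The remaining case is $x < \Delta$ and $y < \Delta$, where the crude bound $f(X) \leq xy$ fails. Here I would bound $f(X) \leq \Delta x$ and $f(B \setminus Y) \geq f(B) - \Delta y$, which reduces the required inequality to
\[
f(B) \geq \Delta(x+y) - xy.
\]
The right-hand side, viewed as a function on $\{0,1,\ldots,\Delta-1\}^2$, is maximized at $x = y = \Delta - 1$, giving $\Delta^2 - 1$. Since every $b_j \geq 1$, we have $f(B) \geq t \geq 2\Delta^2 > \Delta^2 - 1$, so the inequality holds. (The hypothesis $s \geq 2\Delta^2$ enters symmetrically through $f(A) = f(B) \geq s$, but $t \geq 2\Delta^2$ already suffices here.)

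I do not expect a real obstacle: the statement is essentially a routine application of the Gale--Ryser $f$-factor criterion, and the only moderate subtlety is handling the ``both sides small'' case, where one must use the hypothesis $s, t \geq 2\Delta^2$ (via $f(B) \geq t$) rather than any degree bound to dominate $\Delta(x+y) - xy$.
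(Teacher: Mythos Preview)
Your proof is correct and follows essentially the same approach as the paper: both apply the Gale--Ryser criterion (Lemma~\ref{lovlemma}) to $G=K_{s,t}$, reduce to the case $|X|,|Y|<\Delta$, and finish using $f(B)\ge t\ge 2\Delta^2$. The only cosmetic difference is that the paper reaches the ``both small'' case via a minimal-counterexample argument rather than your direct case split, and then uses the cruder bound $f(X)\le\Delta^2\le t-\Delta^2\le f(\overline{Y})$ in place of your optimization of $\Delta(x+y)-xy$.
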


\begin{proof}
	All we have to check is whether the conditions of \autoref{lovlemma} are met if $G = K_{s,t}$.
	
	Suppose indirectly that there is an $(X,Y)$ pair for which \eqref{loveq} does not hold. Choose such a pair with minimal $|X|+|Y|$.
	Then $X=\emptyset$ or $Y=\emptyset$ are impossible, as in those cases \eqref{loveq} trivially holds. Hence, $|X|, |Y|\geq 1$.
	Assuming that  \eqref{loveq} does not hold, we have that
	
	\begin{equation}
		f(X)\geq e(X,Y) + f(\overline Y) + 1,
	\end{equation}
	
	which is equivalent to
	
	\begin{equation}
		f(X) \geq |X||Y| + f(\overline Y) + 1,
		\label{ind_eq_1}
	\end{equation}
	as $G$ is a complete bipartite graph. Furthermore, using the minimality of $|X| + |Y|$, we know that
	
	\begin{equation}
		f(X-a) \leq |X-a||Y| + f(\overline Y)
		\label{mid_1}
	\end{equation}
	for any $a\in X$. \eqref{mid_1} is equivalent to
	\begin{equation}
		f(X)-f(a)\leq |X||Y| - |Y| + f(\overline Y).
		\label{ind_eq_2}
	\end{equation}
	From \eqref{ind_eq_1} and \eqref{ind_eq_2} we have
	\begin{equation}
		f(a) - 1 \geq |Y|
	\end{equation}
	for any $a\in X$, which implies
	\begin{equation}
		\Delta> |Y|.
		\label{end_eq_1}
	\end{equation}
	
The same reasoning also implies that $\Delta>|X|$ whenever $(X, Y)$ is a counterexample.
Therefore we only have to verify that \eqref{loveq} holds in case $|X|< \Delta$ and $|Y|< \Delta$. Recall that $f(B)\geq t$, as all elements of $f$ are positive.
Hence, $f(X)\leq \Delta|X|\leq \Delta^2$, and $f(\overline Y) = f(B) - f(Y)\geq t - \Delta^2,$ and we get that
	
	\begin{equation}
		f(X)\le \Delta^2\leq t - \Delta^2\le f(\overline Y)\leq f(\overline Y) + e_G(X,Y)
	\end{equation}
	holds, since $t\geq 2\Delta^2$.
\end{proof}

\begin{proof}\label{proof:th2}(\autoref{th2})
	Assume that $J=\big(S,T;E(J)\big)$ is a $q$-unbalanced bipartite graph realizing $\pi$. Hence, $q|S|\le |T|$. Moreover, $|T|\le D|S|,$ since $\Delta(\pi)\le D$. We form vertex disjoint 
tuples of the form $(s; t_1, \ldots, t_h),$ such that $s\in S,$ $t_i\in T,$ $q\le h\le D,$ and the collection of these tuples contains every vertex of $S\cup T$ exactly once.   
	 We define the \textit{bias} of the tuple as $$\zeta= \pi(t_{1})+\cdots+\pi(t_{h})-\pi(s).$$ Obviously, $-D\leq\zeta \leq D^2$. 
	The conditions of \autoref{th_sissokho} are clearly met with $K= D^2$. Hence, we can form groups of size at most $2D^2$ in which the sums of biases are zero. 
This way we obtain a partition of $(S,T)$ into $q$-unbalanced set pairs which have zero bias. While these sets may be small, we can combine them so that each combined set is of size at least $2D^2$
and has zero bias. By \autoref{stetel} these are bigraphic sequences. The realizations of these small sequences give the graphs $H_1, \ldots, H_k$. It is easy to see that $v(H_i)\le 4D^2$ for 
every $1\le i\le k$. Finally, we let $H=\cup_iH_i$.
\end{proof}
}

\subsection{Decomposing $G_r$}

Let us apply the Regularity lemma with parameters $0<\eps \ll d\ll \eta$.
By \autoref{redukaltfok} we have that $\delta(G_r)\ge \ell/(q+1) +\eta \ell/2$.

Let $h\ge 1$ be an integer. An $h$-{star} is a $K_{1, h}$. The \textit{center} of an $h$-star is the vertex of degree $h,$ the other vertices are the \textit{leaves}. In case $h=1$ we pick one of the vertices of the 1-star
arbitrarily to be the center. 

\begin{lemma}
	The reduced graph $G_r$ has a decomposition $\mathcal{S}$ into vertex disjoint stars such that each star has at most $q$ leaves.
	\label{decomp_3}
\end{lemma}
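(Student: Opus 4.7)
The plan is to run an extremal swap argument. I would let $F$ denote a collection of vertex-disjoint stars in $G_r$, each with between $1$ and $q$ leaves, that covers the maximum number of vertices of $V(G_r)$, and aim to show that $F$ is in fact spanning.

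Assume for contradiction that some vertex $u$ is uncovered by $F$; then $|N_{G_r}(u)|\ge \ell/(q+1)+\eta\ell/2$. For each neighbor $w$ of $u$ I would exhibit a local modification of $F$ that strictly enlarges its coverage, contradicting maximality. There are four cases. (i) If $w$ is also uncovered, simply add the $1$-star $\{u,w\}$ to $F$. (ii) If $w$ is a leaf of some $h$-star of $F$ with $h\ge 2$, detach $w$ from that star (the remaining $(h-1)$-star is still valid since $h-1\ge 1$) and add the $1$-star $\{u,w\}$. (iii) If $w$ is the center of an $h$-star of $F$ with $h<q$, attach $u$ to that star as an additional leaf. (iv) If $q\ge 2$ and $w$ lies in a $1$-star $\{w,w'\}$ of $F$, redesignate $w$ as the center of that star and attach $u$ as a second leaf, producing a valid $2$-star.

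Once all four cases are ruled out, every $w\in N_{G_r}(u)$ must be the center of a $q$-star of $F$. But since the $q$-stars of $F$ are pairwise vertex-disjoint and each uses $q+1$ vertices, the number of their centers is at most $\ell/(q+1)$, whereas $|N_{G_r}(u)|\ge \ell/(q+1)+\eta\ell/2>\ell/(q+1)$, a contradiction. So $F$ is spanning, proving the lemma.

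The only delicate point is the boundary case $q=1$: case (iv) above is unavailable, since a second leaf would exceed the leaf bound, and the final counting contradiction no longer applies directly. There the stronger hypothesis $\delta(G_r)\ge \ell/2+\eta\ell/2$ supplies a perfect matching of $G_r$ directly by Dirac's theorem (any parity issue being absorbed into the exceptional cluster $W_0$). For all $q\ge 2$ the swap argument above is the entire proof, the crucial slack being the additive $\eta\ell/2$ that forces $|N_{G_r}(u)|$ to strictly exceed $\ell/(q+1)$.
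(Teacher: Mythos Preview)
Your proof is correct and follows essentially the same maximal-packing-with-local-swaps argument as the paper: take a largest partial star-decomposition, pick an uncovered vertex, and derive a contradiction by showing some neighbor allows an augmenting move. Your case split matches the paper's cases (a)--(c) (together with the trivial uncovered-neighbor case), and your explicit count that at most $\ell/(q+1)$ vertices can be $q$-star centers is precisely the ``simple calculation'' the paper alludes to; you are also slightly more careful than the paper in singling out the $q=1$ boundary case.
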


\begin{proof}
	Take a partial star-decomposition of $G_r$ that is as large as possible. Assume that there are uncovered vertices in $G_r$.  Let $U$ denote those vertices that are covered (so we assume that $U$ has maximal cardinality), and let $v$ be an uncovered vertex.  Observe that $v$ has neighbours only in $U,$ otherwise, if $uv\in E(G_r)$ with $u\notin U$, then we can simply add $uv$ to the star-decomposition, contradicting to the maximality of $U$. See \autoref{abra2} for the possible neighbors of $v$.
	
	\begin{enumerate}[a)]
		\item If $v$ is connected to a 1-star, then we can replace it with a 2-star.
		
		\item If $v$ is connected to the center $u$ of an $h$-star, where $h<q$, then we can replace this star with an $h+1$-star by adding the edge $uv$ to the $h$-star.
		
		\item If $v$ is connected to a leaf $u$ of an $h$-star, where $2\le h\leq q$, then replace the star with the edge $uv$ and an $(h-1)$-star (i.e., delete $u$ from it).

	\end{enumerate}

We have not yet considered one more case: when $v$ is connected to the center of a $q$-star. However, simple calculation shows that for every vertex $v$ at least one of the above three cases must hold, using the minimum degree condition of $G_r$. Hence we can increase the number of covered vertices. We arrived at a contradiction, $G_r$ has the desired star-decomposition.
\end{proof}

\begin{figure}[h!]
	\centering
	\tikzstyle{background rectangle}=
	[draw=black,rounded corners=1ex]
	
	\begin{tikzpicture}
	
	\begin{scope}
	[every node/.style={circle,draw}]
	
	\node at (5.5,4) (v) {v};
	
	\grstar{1}{a}{1};
	\grstar{3}{b}{2};
	\grstar{5}{c}{5};
	\end{scope}
	\begin{scope}
	
	\draw [dashed]  (v) -- (oa) node[left,midway]{$a)$} ;
	\draw [dashed] (v) -- (ob) node[left,midway]{$b)$};
	\draw [dashed] (v) -- (b3) node[right,midway]{$c)$};
	\draw [dashed] (v) -- (oc) node[right,midway]{$d)$};
	
	\end{scope}
	
	\end{tikzpicture}
	
	\footnotesize \caption{An illustration for \autoref{decomp_3}}
	\protect\label{abra2}
	\normalsize
\end{figure}
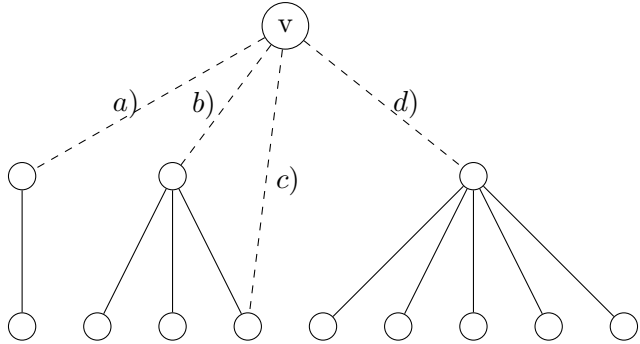


\subsection{Preparing $G$ for the embedding}
 
Consider the $q$-star-decomposition $\mathcal{S}$ of $G_R$ as in \autoref{decomp_3}. Let $\ell_i$ denote the number of $(i-1)$-stars in the decomposition for every $2\le i\le q+1$.
It is easy to see that $$\sum_{i=2}^{q+1}i\ell_i=\ell.$$ 

First we will make every $\eps$-regular pair in $\mathcal{S}$ super-regular by discarding a few vertices from the non-exceptional clusters. Let for example $\mathcal{C}$ be a star in the
decomposition of $G_r$ with center cluster $A$ and leaves $B_1, \ldots, B_k,$ where $1\le k\le q$. Recall that the $(A, B_i)$ pairs has density at least $d$. 
We repeat the following for every $1\le i\le k:$ if $v\in A$ such that $v$ has at most $2dm/3$ neighbors in $B_i$ then discard $v$ from $A,$ put it into $W_0$. Similarly,
if $w\in B_i$ has at most $2dm/3$ neighbors in $A,$ then discard $w$ from $B_i,$ put it into $W_0$. Repeat this process for every star in $\mathcal{S}$. We have the following:

\begin{claim}\label{szup-reg}
We do not discard more than $q\eps m$ vertices from any non-excepti\-o\-nal cluster. 
\end{claim}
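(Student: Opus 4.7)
The plan is to reduce the claim to an application of $\eps$-regularity on each single pair, and then use the structural constraint that every non-exceptional cluster appears in at most $q$ pairs of $\mathcal{S}$.

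First I would focus on a single star in $\mathcal{S}$ with center $A$ and a leaf $B_i$; by construction $(A, B_i)$ is $\eps$-regular with density at least $d$. I would argue that the set of vertices discarded from $A$ because of $B_i$---namely, $X := \{v \in A : deg(v, B_i) \leq 2dm/3\}$---has size at most $\eps m$. Indeed, if $|X| > \eps m = \eps|A|$, applying the regularity condition to $(X, B_i)$ gives $d(X, B_i) > d(A, B_i) - \eps \geq d - \eps$; on the other hand the defining inequality for $X$ forces $d(X, B_i) \leq 2d/3$. Since $\eps \ll d$, we have $2d/3 < d - \eps$, a contradiction. The symmetric argument bounds the number of discards from the $B_i$-side by $\eps m$ as well.

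The final step is to count the pairs that can touch a given non-exceptional cluster $C$. In the star-decomposition $\mathcal{S}$, the cluster $C$ lies in exactly one star: if $C$ is a leaf, then $C$ is paired only with the center of that star (one pair); if $C$ is the center of an $h$-star with $1 \leq h \leq q$, then $C$ is paired with each of its $h \leq q$ leaves (at most $q$ pairs). Thus $C$ is incident to at most $q$ regular pairs in $\mathcal{S}$, each contributing at most $\eps m$ discards, for a grand total of at most $q \eps m$ vertices moved to $W_0$ from $C$.

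There is no real obstacle here. The only calibration needed is that the threshold $2dm/3$ sits strictly below $(d-\eps)m$, which is ensured by the standing assumption $0 < \eps \ll d$; everything else is a direct application of the definition of $\eps$-regularity together with the fact that $\mathcal{S}$ decomposes $G_r$ into vertex-disjoint $h$-stars with $h \leq q$.
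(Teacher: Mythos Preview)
Your proof is correct and follows essentially the same approach as the paper: bound the low-degree vertices in each $\eps$-regular pair by $\eps m$ via the definition of regularity (using $2d/3 < d-\eps$), and then observe that every non-exceptional cluster lies in at most $q$ pairs of the star-decomposition $\mathcal{S}$. The paper additionally remarks that even if one views the discarding as sequential (leaves checked against the already-trimmed center), the threshold $(d-(q+1)\eps)m > 2dm/3$ still holds, but this does not change the argument or the bound.
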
  

\begin{proof}
Given a star $\mathcal{C}$ in the decomposition $\mathcal{S}$ assume that its center cluster is $A$ and let $B$ be one of its leaves.  Since the pair $(A, B)$ is $\eps$-regular with density 
at least $d,$ neither $A,$ nor $B$ can have more than $\eps m$ vertices that have at most $2dm/3$ neighbors in the opposite cluster. Hence, during the above process
we may discard up to $q\eps m$ vertices from $A$. Next, we may discard vertices from the leaves, but since no leaf $B$ had more than $\eps m$ vertices with less than
$(d-\eps)m$ neighbors in $A,$ even after discarding at most $q\eps m$ vertices of $A,$ there can be at most $\eps m$ vertices in $B$ that have less than $(d-(q+1)\eps)m$
neighbors in $A$. Using that $\eps\ll d,$ we have that $(d-(q+1)\eps)>2d/3$. We obtained what was desired. 
\end{proof}

\medskip

By the above claim we can make every $\eps$-regular pair in $\mathcal{S}$ a $(2\eps, 2d/3)$-super-regular pair so that we discard only relatively few vertices. Notice that we only have an 
upper bound for the number of discarded vertices, there can be clusters from which we have not put any points into $W_0$. We repeat the following for every non-exceptional 
cluster: if $s$ vertices were discarded from it with $s<q\eps m$ then we take $q\eps m-s$ arbitrary vertices of it, and place them into $W_0$. This way every 
non-exceptional cluster will have the same number of points, precisely $m-q\eps m$. For simpler notation, we will use the letter $m$ for this new cluster size. 
Observe that $W_0$ has increased by $q\eps m \ell$ vertices, but we still have $|W_0|\le 3dn$ since $\eps \ll d$ and $\ell m\le n$. Since $q\eps m\ll d,$ in the resulting pairs
the minimum degree will be at least $dm/2$. 

\medskip

Summarizing, we obtained the following:

\begin{lemma}\label{szupreg}
By discarding a total of at most $q\eps n$ vertices from the non-exceptional clusters we get that every edge in $\mathcal{S}$ represents a $(2\eps, d/2)$-super-regular pair, and 
all non-exceptional clusters have the same cardinality, which is denoted by $m$. Moreover, $|W_0|\le 3dn$.
\end{lemma}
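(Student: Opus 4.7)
The plan is to process each star in $\mathcal{S}$ edge by edge: first discard vertices that have too few neighbors across each edge (to force the super-regularity degree condition), then top up each cluster by evicting a few more vertices so all clusters have the same size. The discussion immediately preceding the statement, together with \autoref{szup-reg}, already contains most of the ingredients, so the job is to organize them and verify the final parameters.

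Concretely, for each edge $(A,B)$ of a star (with $A$ a center cluster and $B$ a leaf cluster), $\eps$-regularity with density at least $d$ implies that at most $\eps m$ vertices of $A$ have fewer than $(d-\eps)m$ neighbors in $B$, and symmetrically for $B$. I discard all such vertices, putting them into $W_0$. A leaf cluster belongs to a single star edge, so it loses at most $\eps m$ vertices; a center cluster has degree at most $q$ in $\mathcal{S}$, so it loses at most $q\eps m$ vertices (this is \autoref{szup-reg}). After a further round of eviction that places $q\eps m - s$ arbitrary extra vertices into $W_0$ from each cluster that only lost $s < q\eps m$ vertices, every non-exceptional cluster has exactly $(1-q\eps)m$ points; I rename this new common size $m$.

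To verify the output, I use \autoref{felezo} with the shrinking factor $c = 1-q\eps$: each originally $\eps$-regular pair becomes at worst $\eps/(1-q\eps)$-regular with density at least $d - \eps/(1-q\eps)$, and since $\eps \ll d$ these are safely bounded by $2\eps$ and $d/2$ respectively. For the degree condition, every surviving vertex of a center cluster still has at least $(d-\eps)m - q\eps m \ge (d - (q+1)\eps)m \ge 2dm/3$ neighbors in the original leaf, of which at most $q\eps m$ can have been evicted during size equalization, leaving at least $(2d/3 - q\eps)m \ge (d/2)(1-q\eps)m = (d/2)|B'|$ good neighbors; the symmetric bound holds for leaf vertices. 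Totalling across all clusters, at most $q\eps m \ell \le q\eps n$ vertices are discarded to the exceptional class, so $|W_0|$ grows from at most $\eps n$ to at most $(1+q)\eps n \le 3dn$, using $\eps \ll d$.

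The only delicate point is the bookkeeping of parameter slippage: one must check that shrinking the clusters by a factor $1-q\eps$ does not push either the regularity constant past $2\eps$ or the degree ratio below $d/2$, simultaneously for both the center and the leaf side of every star. This is precisely where the hierarchy $\eps \ll d$ is used, and it leaves enough slack that no tighter constants are required.
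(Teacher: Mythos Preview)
Your proof is correct and follows essentially the same approach as the paper: discard low-degree vertices along each star edge (at most $q\eps m$ per cluster by \autoref{szup-reg}), equalize cluster sizes, and use $\eps\ll d$ to absorb the parameter losses. The only difference is cosmetic: you invoke \autoref{felezo} explicitly to certify the $2\eps$-regularity of the trimmed pairs, whereas the paper asserts this directly; and in your degree calculation you subtract $q\eps m$ once more than necessary, but the slack from $\eps\ll d$ makes the final bound $(d/2)|B'|$ hold regardless.
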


Since $v(G)-v(H)$ is bounded above by a constant, when embedding $H$ we need almost every vertex
of $G,$ in particular those in the exceptional cluster $W_0$. For this reason we will assign the vertices of $W_0$
to the stars in $\mathcal{S}$. This is not done in an arbitrary way. 

\begin{definition}\label{assign} Let $v\in W_0$ be a vertex and $(Q, T)$ be an $\eps$-regular pair. We say that \emph{$v\in T$ has large degree to} $Q$ if 
$v$ has at least $\eta |Q|/4$ neighbors in $Q$. Let $S=(A, B_1, \ldots, B_k)$ be a star in $\mathcal{S}$ where $A$ is the center of $S$ and $B_1, \ldots, B_k$ are the leaves, here $1\le k\le q$. 
If $v$ has large degree to any of $B_1, \ldots, B_k,$ then
$v$ \emph{can be assigned to} $A$. 
If $k<q$ and $v$ has large degree to $A,$ then $v$ \emph{can be assigned to} any of the $B_i$ leaves.
\end{definition}

\medskip

\begin{observation} If we assign new vertices to a $q$-star, then we necessarily assign them to the center. Since before assigning, the number of vertices in the leaf-clusters is exactly $q$ times the number of vertices in the center cluster, after assigning new vertices to the star, $q$ times the cardinality of the center will be larger than the total number of vertices in the leaf-clusters.
If $S\in \mathcal{S}$ is a $k$-star with $1\le k<q,$ and we assign up to $cm$ vertices to any of its clusters, where $0<c\ll 1,$ then even after assigning new vertices we will have that $q$ times the cardinality of the center is larger than the total number of vertices in the leaf-clusters. 
\end{observation}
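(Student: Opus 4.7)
The plan is to verify both halves of the observation by an elementary cardinality count, invoking Definition~\ref{assign} to pin down where an assigned vertex is allowed to land.

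For the $q$-star case I would first note that Definition~\ref{assign} offers exactly two assignment rules: a vertex $v\in W_0$ may be assigned to the center of a star if it has large degree to one of its leaves, or to a leaf provided the star has strictly fewer than $q$ leaves. When $S$ is a $q$-star the second rule is vacuous, so every vertex assigned to $S$ must land in the center $A$. Before any assignment all $q+1$ clusters have cardinality $m$, so $q|A|=qm=\sum_{i=1}^{q}|B_i|$. Since the assignment only increases $|A|$ and leaves the $|B_i|$ unchanged, after any nonempty assignment we get $q|A|>\sum_{i=1}^{q}|B_i|$, as required.

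For the $k$-star case with $1\le k<q$, let $c_0\le cm$ be the number of vertices added by the assignment to the center and $c_i\le cm$ the number added to leaf $B_i$. I would then directly check the inequality
\[
q(m+c_0)\;\ge\;\sum_{i=1}^{k}(m+c_i).
\]
Bounding each $c_i$ by $cm$, the right-hand side is at most $k(1+c)m$, so the inequality reduces to $(q-k)m+qc_0\ge ckm$. Since $q-k\ge 1$, the left-hand side is at least $m$, whereas the right-hand side is $ckm$, which is $o(m)$ because $c\ll 1$ and $q$ is a fixed integer; hence the inequality holds.

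I do not anticipate any real obstacle: both parts are pure size counts that follow mechanically from Definition~\ref{assign} and from the cluster sizes set up by \autoref{szupreg}. The only subtlety is reading the definition carefully enough to confirm that no assignment to a leaf is ever permitted for a $q$-star, which forces all mass added to such a star onto its center.
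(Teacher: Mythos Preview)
Your proposal is correct and is precisely the elementary cardinality check the paper has in mind; the Observation is stated without a separate proof in the paper, and your appeal to Definition~\ref{assign} together with the uniform cluster size $m$ from \autoref{szupreg} is exactly the intended justification. The only cosmetic point is that in the $k$-star case you phrase the target as a weak inequality but your bound $(q-k)m\ge m>ckm$ actually yields the strict inequality the Observation asserts.
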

 
The following lemma plays a crucial role in the embedding algorithm.

\begin{lemma}\label{szetosztas}
Every vertex of $W_0$ can be assigned to at least $\eta \ell/4$ non-exceptional clusters.
\end{lemma}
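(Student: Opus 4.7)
I would fix an arbitrary $v \in W_0$ and count the non-exceptional clusters to which $v$ can be assigned, in three steps.

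First, book\-keeping on the degrees. The hypothesis $\delta(G) \ge n/(q+1) + \eta n$ and the bound $|W_0| \le 3dn$ from \autoref{szupreg} give that at least $n/(q+1) + \eta n - 3dn$ of $v$'s neighbours lie in the union of non-exceptional clusters. Let $G_v = \{C : |N(v) \cap C| \ge \eta m/4\}$ be the set of clusters where $v$ has \emph{large degree}. Every cluster outside $G_v$ contributes at most $\eta m / 4$ to the degree count, so
$$|G_v|\cdot m + \ell \cdot \eta m/4 \ \ge \ n/(q+1) + \eta n - 3dn.$$
Using $\ell m \le n$ together with $d \ll \eta$, this rearranges (modulo lower-order terms) to $|G_v| \ge \ell/(q+1) + c \eta \ell$ for some positive constant $c$ of size roughly $3/4$.

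Second, use the star decomposition $\mathcal{S}$ supplied by \autoref{decomp_3}. Since there are at most $\ell/(q+1)$ $q$-stars in $\mathcal{S}$, at most $\ell/(q+1)$ clusters in $G_v$ can be centers of $q$-stars. Hence the clusters of $G_v$ that are either a leaf of some star in $\mathcal{S}$ or the center of a star with fewer than $q$ leaves number at least $c \eta \ell$. For each such cluster $C$, \autoref{assign} yields assignment witnesses: if $C$ is a leaf of a star $S$, then the center of $S$ can receive $v$; if $C$ is the center of a star $S$ with $k_S < q$ leaves, then each of the $k_S$ leaves of $S$ can receive $v$.

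The main obstacle is the final combinatorial count. Several leaves of the same star may lie in $G_v$ while contributing only one assignable center, so one has to prevent too much collapse when converting the witness set into \emph{distinct} assignable clusters. The idea is to balance the two types of contributions: leaves in $G_v$ produce assignable centers (and at most $q$ leaves collapse to one center, losing a factor of at most $q$), while non-$q$-star centers in $G_v$ each unlock $k_S\ge 1$ assignable leaves disjoint from the centers counted above. Adding these two disjoint contributions, separating the stars by type and applying a pigeonhole on how $G_v$ is distributed among $q$-stars versus smaller stars, drives the bound down to the required $\eta \ell / 4$; this is where the choice of the threshold $\eta/4$ in \autoref{assign} and the loss of $\ell/(q+1)$ to $q$-star centers are tuned against each other.
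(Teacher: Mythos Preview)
Your first two steps are sound and amount to the direct form of the paper's argument, which instead assumes fewer than $\eta\ell/4$ assignable clusters and bounds $d(v,W\setminus W_0)$ star by star to force a contradiction.

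The gap is Step~3. You correctly isolate the obstacle---several leaves of one star in $G_v$ yield only one assignable center---but then assert that ``balancing'' and a ``pigeonhole'' recover the constant $\tfrac14$; they do not. Writing $b$ for the number of leaves in $G_v$ and $c$ for the number of non-$q$-star centers in $G_v$, your two disjoint contributions give at least $b/q+c$ assignable clusters, and from $b+c\gtrsim\tfrac34\eta\ell$ this yields only $|\mathcal A|\gtrsim \tfrac{3}{4q}\,\eta\ell$. This is sharp rather than slack: if every star in $\mathcal S$ is a $q$-star and $v$ is adjacent to every vertex of every center cluster but has large degree only into the $q$ leaves of roughly $\eta\ell/q$ of the stars, then $d(v,W\setminus W_0)\approx \tfrac{n}{q+1}+\eta n$, yet exactly those $\sim\eta\ell/q$ centers are assignable---below $\eta\ell/4$ once $q\ge 5$. (The paper's displayed upper bound on $d(v,W\setminus W_0)$ is in fact also violated by this configuration: the single additive term $\eta\ell m/4$ is too small to absorb the excess degree coming from stars to which $v$ \emph{can} be assigned.) What your argument does establish cleanly is $|\mathcal A|\ge c_q\,\eta\ell$ for some $c_q>0$ depending only on $q$---for instance $c_q=\tfrac{1}{2q}$---and this is all that \autoref{szetosztas2} actually needs, since $\tfrac{|W_0|}{c_q\eta\ell}\le\sqrt d\,m$ still holds for $d\ll\eta$ and fixed $q$. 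So finish Step~3 with the honest $q$-dependent constant rather than claiming $\tfrac14$.
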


\begin{proof}
Suppose that there exists a vertex $w\in W_0$ that can be assigned to less than $\eta \ell/4$ clusters. 
If $w$ cannot be assigned to any cluster of some $k$-star $S_k$ with $k<q,$ then the total degree of $w$ into the clusters of $S_k$ is at most $k\eta m/4$.
If $w$ cannot be assigned to any cluster of some $q$-star $S_q,$ then the total degree of $w$ into the clusters of $S_q$ is at most $m+q\eta m/4,$ since
every vertex of the center cluster could be adjacent to $w$. Considering that $w$ can be assigned to at most $\eta \ell/4-1$ clusters and that $d(w, W-W_0)\ge n/(q+1)+\eta n/2,$ we
obtain the following inequality:

\begin{gather*}
	\frac{n}{q+1}+\frac{\eta n}{2}\le d(v, W-W_0)\le \eta \frac{\ell m}{4}+\\
\sum_{k=1}^{q-1}(k+1)\eta \frac{\ell_{k+1}m}{4}+ q\eta\frac{\ell_{q+1}m}{4}+\ell_{q+1}m.
\end{gather*}

Using  $m\ell \le n$ and  $\sum_{k=1}^q(k+1)\allowbreak\ell_{k+1}=\allowbreak\ell,$ we get

\begin{gather*}
\frac{m\ell}{q+1}+\frac{\eta m\ell}{2}\le \eta \frac{\ell m}{4}+(\ell-\ell_{q+1})\frac{\eta m}{4}+\\
 q\eta\frac{\ell_{q+1}m}{4}+\ell_{q+1}m.
\end{gather*}
Dividing both sides by $m$ and cancellations give
$$\frac{\ell}{q+1}\le q\frac{\eta \ell_{q+1}}{4}+(1-\frac{\eta}{4})\ell_{q+1}.$$ Noting that $(q+1)\ell_{q+1}\le \ell,$ one can easily see that we arrived at a contradiction. 
Hence every vertex of $W_0$ can be assigned to several non-exceptional clusters. 
\end{proof}

\medskip

\autoref{szetosztas} implies the following:

\begin{lemma}\label{szetosztas2}
One can assign the vertices of $W_0$ so that at most $\sqrt{d}m$ vertices are assigned to non-exceptional clusters. 
\end{lemma}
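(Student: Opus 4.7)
My plan is to assign the vertices of $W_0$ to the non-exceptional clusters greedily, processing them one at a time in arbitrary order. For each $v\in W_0$, let $\mathcal{A}(v)$ denote the collection of non-exceptional clusters to which $v$ can be assigned in the sense of \autoref{assign}; by \autoref{szetosztas}, $|\mathcal{A}(v)| \ge \eta\ell/4$. At step $v$, I maintain the current \emph{load} of each cluster (the number of $W_0$-vertices already placed into it) and assign $v$ to any cluster in $\mathcal{A}(v)$ whose current load is strictly less than $\sqrt{d}\, m$.

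To show this procedure never gets stuck, I argue by contradiction. Suppose that at some step every cluster of $\mathcal{A}(v)$ already has load at least $\sqrt{d}\, m$. Then the total number of vertices assigned so far is at least $|\mathcal{A}(v)|\cdot \sqrt{d}\, m \ge (\eta\ell/4)\sqrt{d}\, m$. But this total is bounded above by $|W_0| \le 3dn \le 3d\ell m$, using \autoref{szupreg} together with $\ell m \le n$. Rearranging gives $\eta \le 12\sqrt{d}$, which contradicts the parameter hierarchy $\eps\ll d\ll \eta$ once $d$ is chosen so that $d \le \eta^2/200$, say---and we are free to do so when setting up the Regularity Lemma.

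This counting step is really the only thing to check; once it is verified, the greedy algorithm terminates with every vertex of $W_0$ assigned and no non-exceptional cluster receiving more than $\sqrt{d}\, m$ of them. I do not expect a serious obstacle: the fractional analogue -- spread each $v$ uniformly across $\mathcal{A}(v)$ -- already gives an average load of at most $4|W_0|/(\eta\ell) \le 12dm/\eta$, which is well below $\sqrt{d}\, m$, so the integral greedy version has comfortable slack to round within.
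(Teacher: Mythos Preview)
Your greedy argument is exactly the paper's proof spelled out in full; the paper compresses it to the single inequality $4|W_0|/(\eta\ell)\le \sqrt{d}\,m$. One small slip: from $\ell m\le n$ you cannot conclude $3dn\le 3d\ell m$ (the inequality points the other way); use instead $\ell m=n-|W_0|\ge(1-3d)n$, giving $3dn\le \tfrac{3d}{1-3d}\,\ell m$, and the resulting contradiction $\eta\le \tfrac{12}{1-3d}\sqrt{d}$ still follows from $d\ll\eta$.
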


\begin{proof}
Since we have at least $\eta \ell/4$ choices for every vertex, the bound follows from the inequality $\frac{4|W_0|}{\eta \ell}\le \sqrt{d}m,$ where we used $d\ll \eta$ and $|W_0|\le  3dn$.
\end{proof}

\medskip 

\begin{observation}\label{fokok}
A key fact is that the number of newly assigned vertices to a cluster is much smaller than their degree into the opposite cluster of the regular pair since $\sqrt{d}m \ll \eta m/4$.
\end{observation}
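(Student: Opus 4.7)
The plan is to combine the two quantitative bounds already established in the subsection and verify that the numerical inequality $\sqrt{d}\,m \ll \eta m/4$ is exactly what the observation asserts; no new combinatorial content is required.

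First, I would invoke \autoref{szetosztas2}, which yields an upper bound of $\sqrt{d}\,m$ on the number of $W_0$-vertices assigned to any single non-exceptional cluster. Second, I would unpack \autoref{assign}: a vertex $v\in W_0$ is assigned to a cluster of a star $S\in\mathcal{S}$ only if it has large degree to some cluster $Q$ paired with its host in an $\eps$-regular pair of $S$, i.e., $v$ has at least $\eta|Q|/4 = \eta m/4$ neighbors in $Q$, using that every non-exceptional cluster now has size exactly $m$ after the preprocessing of \autoref{szupreg}. These are, respectively, the left- and right-hand sides of the asserted comparison.

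Finally, the inequality $\sqrt{d}\,m \ll \eta m/4$, i.e.\ $\sqrt{d}\ll\eta/4$, is an immediate consequence of the parameter hierarchy $0<\eps\ll d\ll\eta$ fixed at the outset of the proof of \autoref{konstans}; one may, for instance, insist that $d<\eta^2/100$, so that $\sqrt{d}<\eta/10<\eta/4$. I do not foresee any genuine obstacle here, since the hierarchy was set up precisely so that $d$ could be taken arbitrarily small relative to $\eta$. The actual force of the observation is structural rather than computational: it guarantees that in the Blow-up-lemma based embedding of $H$ in the next subsection, each newly assigned $W_0$-vertex can be placed at a vertex of its host cluster and matched to neighbors in the opposite cluster via the regular pair, because the $\eta m/4$ degree margin dominates the $\sqrt{d}\,m$ perturbation caused by the other assigned vertices. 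This is why the comparison deserves to be recorded explicitly before the embedding step is carried out.
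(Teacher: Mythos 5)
Your proposal is correct and matches the paper's (implicit) justification: the observation is exactly the combination of the bound $\sqrt{d}m$ from \autoref{szetosztas2}, the degree threshold $\eta m/4$ in \autoref{assign}, and the hierarchy $d\ll\eta$. Nothing more is needed, and your remark about its role in the first-phase covering of the former $W_0$-vertices is consistent with how the paper uses it.
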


\subsection{The embedding algorithm}

The embedding is done in two phases. In the first phase we cover every vertex that belonged to $W_0,$ together with some other vertices of the non-exceptional clusters.
In the second phase we are left with super-regular pairs into which we embed what is left from $H$ using the Blow-up lemma.

\subsubsection{The first phase}

Let $(A, B)$ be an $\eps$-regular cluster-edge in the $h$-star $\mathcal{C}\in \mathcal{S}$. 
We begin with partitioning $A$ and $B$ randomly, obtaining $A=A'\cup A''$ and $B=B'\cup B''$ with $A'\cap A''=B'\cap B''=\emptyset$. For every $w\in A$ (except those that came from $W_0$) flip a coin. 
If it is heads, we put $w$ into $A',$ otherwise we put it into $A''$. Similarly, we flip a coin for every $w\in B$  (except those that came from $W_0$) and depending on the outcome, we either put the 
vertex into $B'$ or into $B''$. The proof 
of the following lemma is standard, uses Chernoff's bound (see in~\cite{AS}), we omit it.

\begin{lemma}\label{random}
With high probability, that is, with probability at least $1-1/n,$ we have the following:

\begin{itemize}

\item $\big||A'|-|A''|\big|=o(n)$ and $\big||B'|-|B''|\big|=o(n)$

\item $deg(w, A'), deg(w, A'')>deg(w, A)/3$ for every $w\in B$

\item $deg(w, B'), deg(w, B'')>deg(w, B)/3$ for every $w\in A$

\item the density $d(A', B')\ge d/2$ 

\end{itemize}

\end{lemma}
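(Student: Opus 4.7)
The plan is to verify each of the four bullets by a direct concentration argument and glue them together by a union bound. The randomness consists of $|A\setminus W_0|+|B\setminus W_0|$ independent fair coin flips; the vertices inherited from $W_0$, of which there are at most $\sqrt d m$ on each side by \autoref{szetosztas2}, are placed deterministically and contribute only a lower-order term to all quantities below. Throughout, I use that $(A,B)$ is $(2\eps,d/2)$-super-regular by \autoref{szupreg}, so $|A|=|B|=m$ and every $w\in B$ satisfies $deg(w,A)\ge dm/2$, and symmetrically.

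For the first bullet, $|A'|$ is a sum of $|A\setminus W_0|$ independent Bernoulli$(1/2)$ variables plus a fixed offset, so a standard Chernoff bound places $|A'|$ within $\sqrt{n\log n}=o(n)$ of its mean with probability at least $1-n^{-3}$, and the same estimate applies to $|B'|$. For the second and third bullets, I would fix $w\in B$ and write $deg(w,A')$ as a deterministic $W_0$-contribution plus a sum of independent Bernoulli$(1/2)$ variables, one per neighbour of $w$ in $A\setminus W_0$. Since the random part has mean at least $(dm/2-\sqrt d m)/2\ge dm/5$, a Chernoff tail gives $\Pr[\,deg(w,A')\le deg(w,A)/3\,]\le 2\exp(-c\,dm)$ for some constant $c>0$; a union bound over the at most $2m$ vertices of $A\cup B$ still leaves failure probability $o(1/n)$, and the analogous argument handles $deg(w,B')$ and $deg(w,B'')$.

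The only bullet whose underlying indicator variables are not independent is the density bound. Writing $e(A',B')=\sum_{uv\in E(A,B)}\mathbf{1}[u\in A',\,v\in B']$, I would reveal the at most $2m$ coin flips one at a time: the resulting Doob martingale has increments bounded by $\Delta(G)\le n$, so Azuma's inequality concentrates $e(A',B')$ within $o(n^2)$ of its mean $e(A,B)/4\ge d|A||B|/4$ with probability $1-o(1/n)$. Combined with the size estimate $|A'||B'|=(1+o(1))|A||B|/4$ from the first bullet this yields $d(A',B')\ge d/2$. A final union bound over the four events gives the claimed success probability of at least $1-1/n$. No single step is a genuine obstacle; the only place that requires more than a textbook Chernoff call is the dependence in the density estimate, which the martingale argument above resolves.
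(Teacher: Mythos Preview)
Your argument is correct and matches the paper's intention; the paper omits the proof entirely, remarking only that it is a standard Chernoff application. One simplification worth noting: the Azuma step for the density bound is unnecessary. Once the first bullet gives $|A'|,|B'|\ge m/3\gg 2\eps m$, the $2\eps$-regularity of the pair $(A,B)$ from \autoref{szupreg} (or directly \autoref{felezo}) yields $|d(A',B')-d(A,B)|<2\eps$ \emph{deterministically}, hence $d(A',B')\ge d-2\eps\ge d/2$ since $\eps\ll d$. So Chernoff alone suffices, exactly as the paper indicates.
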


it is easy to see that \autoref{random} implies that $(A', B')$ is a $(5\eps, d/6)$-super-regular pair having density at least $d/2$ with high probability.

Assume that $v$ was an element of $W_0$ before we assigned it to the cluster $A,$ and assume further
that $deg(v, B)\ge \eta m/4$. Since $(A, B)$ is an edge of the star-decomposition, either $A$ or $B$ must be the center of $\mathcal{C}$.

Let $H_i$ be one of the $q$-unbalanced bipartite subgraphs of $H$ that has not been embedded yet. We will use $H_i$ to cover $v$. Denote $S_i$ and $T_i$ the vertex classes
of $H_i,$ where $|S_i|\ge q|T_i|$. Let $S_i=\{x_1, \ldots,x_s\}$ and $T_i=\{y_1, \ldots, y_t\}$.

If $A$ is the center of $\mathcal{C}$ then the vertices of $T_i$ will cover vertices of $A',$ and the vertices of $S_i$ will cover vertices of $B'$. If $B$ is the center, $S_i$ and $T_i$ will switch roles.
The embedding of $H_i$ is essentially identical in both cases, so we will only discuss the case when $A$ is the center.\footnote{Recall that if $h<q$ then we may assigned $v$ to a leaf, so in such a case $B$ could be the center.} 

In order to cover $v$ we will essentially use a well-known method called Key lemma in~\cite{KS93}. We will heavily use the
fact that $$0<\eps \ll d \ll \eta.$$
The details are as follows.
We construct an edge-preserving injective mapping $\phi: S_i\cup T_i \longrightarrow A'\cup B'$. In particular, we
will have $\phi(S_i)\subset B'$ and $\phi(T_i)-v \subset A'$. First we let $\phi(y_1)=v$. Set $N_1=N(v)\cap B'$.
Using \autoref{random} we have that $|N_1|\ge \eta m/12\gg \eps m$. 

Next we find $\phi(y_2)$.  Since $|N_1|\gg \eps m,$ by $5\eps$-regularity the majority of the vacant vertices of $A'$
will have at least $d|N_1|/3$ neighbors in $N_1$. Pick any of these, denote it by $v_2$ and let $\phi(y_2)=v_2$.
Also, set $N_2=N_1\cap N(v_2)$. 

In general, assume that we have already found the vertices $v_2, v_3, \ldots, v_i,$ their common neighborhood
in $B'$ is $N_i,$ and \[|N_i|\ge \frac{\eta d^{i-1}}{3^{i-2}\cdot 36}m\gg \eps m.\]
By $5\eps$-regularity, this implies that the majority of the vacant vertices of $A'$ has large degree into $N_i,$ at least $d\cdot |N_i|/3,$ and this, as above, can be used
to find $v_{i+1}$. Then we set $\phi(y_{i+1})=v_{i+1}$. Since $\eta$ and $d$ is large compared to $\eps,$ even into
the last set $N_{t-1}$ many vacant vertices will have large degrees.

As soon as we have $\phi(y_1), \ldots, \phi(y_t),$ it is easy to find the images for $x_1, \ldots, x_t$. Since
$|N_t|\gg \eps m \gg s=|S_i|,$ we can arbitrarily choose $s$ vacant points from $N_t$ for the $\phi(x_j)$ images.

Note that we use less than $v(H_i)\le 4D^2$ vertices from $A'$ and $B'$ during this process. We can repeat it for
every vertex that were assigned to $A,$ and still at most $\sqrt{d}2D^2 m$ vertices will be covered from $A'$ and
from $B'$. 

Another observation is that every $h$-star in the decomposition before this embedding phase was $h$-unbalanced,
now, since we were careful, these have become $h'$-balanced with $h'\le h$. 

Of course, the above method will be repeated for every $(A, B)$ edge of the decomposition for which we have assigned
vertices of $W_0$.

\subsubsection{The second phase}

 In the second phase we first unite all the randomly partitioned clusters. For example, assume that after covering
the vertices that were coming from $W_0$ the set of vacant vertices of $A'$ is denoted by $A'_v$. Then we let
$A_v=A'_v\cup A'',$ and using analogous notation, let $B_v=B'_v\cup B''$. 

\begin{claim}\label{epszreg}
All the $(A_v, B_v)$ pairs are $(3\eps, d/6)$-super-regular with density at least $d/2$.
\end{claim}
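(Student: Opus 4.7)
My plan is to verify each of the three properties --- $3\eps$-regularity, minimum degree at least $(d/6)|B_v|$ (and symmetrically $(d/6)|A_v|$), and density at least $d/2$ --- separately for each cluster-edge $(A,B)$ of the star-decomposition $\mathcal{S}$. The key quantitative input is that the first phase removes at most $2D^2\sqrt{d}\,m$ vertices from each of $A'$ and $B'$, so $|A_v|, |B_v| \ge m - 2D^2\sqrt{d}\,m$. Because $D$ is a fixed constant and the hierarchy $\eps \ll d \ll \eta$ permits choosing $d$ arbitrarily small in terms of $D$, the distortion factor $2D^2\sqrt{d}$ may be assumed as small as needed.

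For $3\eps$-regularity I will apply the argument underlying \autoref{felezo} with $c = 1 - 2D^2\sqrt{d}$ close to $1$: here $A_v, B_v$ are subsets of $A, B$ of size at least $c\,m = c|A|$, and $(A,B)$ is $2\eps$-regular by \autoref{szupreg}. Any $X \subseteq A_v$ with $|X| \ge 3\eps|A_v|$ satisfies $|X| \ge 2\eps|A|$, and similarly for $Y \subseteq B_v$, so by $2\eps$-regularity of $(A,B)$ we have $|d(X,Y) - d(A,B)| < 2\eps$; applying the same bound with $X=A_v, Y=B_v$ gives $|d(A_v, B_v) - d(A,B)| < 2\eps$, and a sharper comparison exactly in the style of \autoref{felezo} then yields $|d(X,Y) - d(A_v, B_v)| < 3\eps$. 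The density bound follows from the same comparison: $d(A_v, B_v) \ge d(A,B) - 2\eps \ge d - 2\eps \ge d/2$, since the reduced-graph edge $(A,B)$ had density at least $d$.

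The minimum-degree condition is straightforward once one notes that every vertex of $A_v$ lies in the original cluster $A$ (the vertices of $W_0$ assigned to $A$ are all absorbed in the first phase) and that $B'' \subseteq B_v$. For $v \in A_v \subseteq A$, \autoref{random} furnishes $\deg(v, B'') > \deg(v, B)/3$, while the $(2\eps, d/2)$-super-regularity of $(A,B)$ from \autoref{szupreg} gives $\deg(v, B) \ge dm/2$, so $\deg(v, B_v) \ge \deg(v, B'') > dm/6 \ge (d/6)|B_v|$. The symmetric inequality for $w \in B_v$ runs the same way, using $A'' \subseteq A_v$ and the corresponding assertion of \autoref{random} for $w \in B$.

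The main obstacle will be purely bookkeeping the small deteriorations coming from super-regularization, random partitioning, and the first-phase absorption. Each of these loses a factor of size $O(\sqrt{d})$ or $O(\eps)$ in the relevant parameters, and the hierarchy $\eps \ll d \ll \eta$ with $D$ a fixed constant ensures that these compound to something well within the tolerances $3\eps$ and $d/6$ of the claim.
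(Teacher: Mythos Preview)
Your proof is correct and follows essentially the same approach as the paper. The paper's own proof is even terser---it simply asserts that $3\eps$-regularity and the density bound are ``easy to see'' because only few vertices were removed, and handles the minimum-degree condition exactly as you do, via \autoref{random} (giving $\deg(v,B'')>\deg(v,B)/3$) combined with the $(2\eps,d/2)$-super-regularity from \autoref{szupreg} (giving $\deg(v,B)\ge dm/2$), so that $\deg(v,B_v)\ge\deg(v,B'')>dm/6$. Your explicit invocation of the argument behind \autoref{felezo} with $c=1-2D^2\sqrt{d}$ to obtain $2\eps/c\le 3\eps$-regularity, and your observation that $A_v\subseteq A$ because all $W_0$-assigned vertices are covered in the first phase, merely spell out what the paper leaves implicit.
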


\begin{proof}
The $3\eps$-regularity of these pairs is easy to see, like the lower bound for the density, since we have only covered
relatively few vertices of the clusters. For the large minimum
degrees note that by \autoref{random} every vertex of $A$ had at least $dm/6$ neighbors in $B'',$ hence, in $B_v$ as 
well, and analogous bound holds for vertices of $B$.  
\end{proof}

At this point we want to apply the Blow-up lemma for every star of $\mathcal{S}$ individually. For that we first have to assign those
subgraphs of $H$ to stars that were not embedded yet. We need a lemma.

\begin{lemma}\label{vegre}
Let $K_{a, b}$ be a complete bipartite graph with vertex classes $A$ and $B,$ where $|A|=a$ and $|B|=b$. Assume that
$a\le b= h a,$ where $1\le h\le q$. 
Let $H'$ be the vertex disjoint union of $q$-unbalanced bipartite graphs: 
\[H'=\bigcup\limits_{j=1}^t H_j,\]
 such that $v(H_j)\le 2D^2$ for every $j$. If $v(H') \le a+b-4(2q+1)D^2,$ then $H'\subset K_{a, b}$.
\end{lemma}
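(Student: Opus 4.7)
The plan is to embed $H'$ into $K_{a,b}$ by choosing, for each $H_j$, one of two orientations: the \emph{natural} orientation, which places the smaller class $S_j$ into $A$ and the larger class $T_j$ into $B$, or the \emph{flipped} orientation, which reverses this. Since $K_{a,b}$ is complete bipartite, any global choice of orientations that respects the class capacities $|A|=a$ and $|B|=b$ yields an embedding. Write $s_j=|S_j|$ and $t_j=|T_j|$, so that $t_j\ge q s_j$ by $q$-unbalance; let $X=\sum_j s_j$, $Y=\sum_j t_j$, and note $X+Y=v(H')\le a+b-4(2q+1)D^2$.

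I would first observe that the natural orientation alone never overloads $A$. Indeed, $t_j\ge q s_j$ gives $s_j\le(s_j+t_j)/(q+1)$, so
\[
X\;\le\;\frac{X+Y}{q+1}\;\le\;\frac{a+b}{q+1}\;=\;\frac{(h+1)a}{q+1}\;\le\;a,
\]
using $h\le q$. Hence if additionally $Y\le b$, we are done. Otherwise $Y>b$, and I must flip a subset $S'\subseteq\{1,\ldots,t\}$ so that the resulting loads $X+P\le a$ and $Y-P\le b$ are valid, where $P=\sum_{j\in S'}(t_j-s_j)$. Thus I need to realize some value $P$ in the target interval $[Y-b,\;a-X]$ as a subset-sum of the nonnegative increments $\{t_j-s_j\}_{j=1}^{t}$.

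I would then run a trivial greedy: order the indices arbitrarily and include them in $S'$ one by one, stopping the first time the running sum reaches or exceeds $Y-b$. The total available sum is $\sum_j(t_j-s_j)=Y-X\ge Y-b$ (since $X\le a\le b$), so the process does reach $Y-b$. Each individual increment satisfies $t_j-s_j\le t_j\le v(H_j)\le 2D^2$, while the target interval has length
\[
(a-X)-(Y-b)\;=\;a+b-X-Y\;\ge\;4(2q+1)D^2,
\]
which dwarfs the maximal jump $2D^2$. Therefore the step that first crosses $Y-b$ cannot overshoot $a-X$, and the resulting orientation choices give the desired embedding.

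The main conceptual obstacle is that flipping a single $H_j$ only shifts the imbalance one way (it moves $t_j-s_j\ge 0$ vertices from $B$ to $A$), so it can rescue the case $Y>b$ but would be useless against a hypothetical $X>a$. Consequently, the crux is to establish $X\le a$ a priori, and this is exactly where the $q$-unbalance of each $H_j$ meets the structural constraint $h\le q$ on the host bipartite graph; without either hypothesis, the argument would collapse. Once this bound is in hand, the generous slack $4(2q+1)D^2$ absorbs the granularity of the greedy step automatically.
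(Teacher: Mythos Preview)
Your argument is correct. Both your proof and the paper's rely on the same underlying idea---choose, for each $H_j$, which side of $K_{a,b}$ receives the larger class, using the slack $4(2q+1)D^2$ to absorb the granularity---but they are organised differently. The paper runs an \emph{adaptive} greedy: it processes the $H_j$ one at a time and, depending on the sign of $h a_k-b_k$ at step $k$, sends the larger class to whichever side is currently ``behind'', maintaining the invariant $a_k\le b_k$ throughout; termination then forces $a_k<4D^2$ and hence $b_k<(2q+1)2D^2$. You instead perform a \emph{two-phase} analysis: first place every $H_j$ in the natural orientation, use the $q$-unbalance together with $h\le q$ to show this never overloads $A$, and then run a subset-sum greedy to flip just enough pieces to unload $B$. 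Your version makes the role of the hypothesis $h\le q$ more transparent (it is exactly what gives $X\le a$), and the interval argument for the greedy is cleaner than tracking the running imbalance; the paper's version has the minor advantage of being a single pass.
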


\medskip

Observe that if we have \autoref{vegre}, we can distribute the $H_i$ subgraphs among the stars of $\mathcal{S},$ and then apply the Blow-up lemma. Hence, we are done with proving \autoref{konstans} if we prove \autoref{vegre} above.

\begin{proof}
The proof is an assigning algorithm and its analysis.
We assign the vertex classes of the $H_j$ subgraphs to $A$ and $B,$ one-by-one. Before assigning the $j$th subgraph $H_j$
the number of vacant vertices of $A$ is denoted by $a_j$ and the number of vacant vertices of $B$ is denoted by $b_j$.

Assume that we want to assign $H_k$. If $ha_k-b_k>0,$ then the larger vertex class of $H_k$ is assigned to $A,$ the smaller
is assigned to $B$. Otherwise, if $ha_k-b_k\le 0,$ then we assign the larger vertex class to $B$ and the smaller one to $A$.
Then we update the number of vacant vertices of $A$ and $B$. Observe that using this assigning method we always have
$a_k\le b_k$.

The question is whether we have enough room for $H_k$. If $ha\ge 4hD^2,$ then we must have enough room, since
$b_k\ge a_k$ and every $H_j$ has at most $2D^2$ vertices. Hence, if the algorithm stops, we must have $a_k<4D^2$.
Since $b_k-ha_k\le 2D^2$ must hold, we have $b_k<(2h+1)2D^2<(2q+1)2D^2$. From this the lemma follows.    
\end{proof}

\section{Remarks}

One can prove a very similar result to \autoref{konstans}, in fact the result below follows easily from it.
For stating it we need  the notion of graph edit distance which is detailed e.g.~in~\cite{Ryan}: the edit distance 
between two graphs on the same labeled vertex set is defined
to be the size of the symmetric difference of the edge sets

\begin{theorem}
	\label{fotetel2}
		Let $q\ge 1$ be an integer. For every $\eta>0$ and $D \in\mathbb{N}$ there exists an $n_0 = n_0(\eta, q)$ and a $K = K(\eta, D, q)$ such that if $n\geq n_0$, $\pi$ is a $q$-unbalanced degree sequence of length $n$ with $\Delta(\pi)\leq D$, $G$ is a graph on $n$ vertices with $\delta(G)\geq\frac{n}{q+1}+\eta n$, then there exists a graph $G'$ on $n$ vertices such that the edit distance of $G$ and $G'$ is at most $K$, and $\pi$ can be embedded into $G'$.

\end{theorem}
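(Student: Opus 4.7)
The strategy is to reduce \autoref{fotetel2} to \autoref{konstans} by peeling off a bounded-size portion of a realization of $\pi$ and then handling that portion through a constant-size edit of $G$.

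First, apply \autoref{th2} to $\pi$ to obtain a $q$-unbalanced bipartite realization $H=H_1\cup\cdots\cup H_k$, where each $H_i$ is itself $q$-unbalanced with $v(H_i)\le 4D^2$. Let $M=M(\eta,D,q)$ be the constant supplied by \autoref{konstans}. Greedily collect components $H_{i_1},H_{i_2},\ldots$ until the running total of vertices first reaches $M$, and call their union $H^{ex}$; since each component has at most $4D^2$ vertices, $M\le v(H^{ex})\le M+4D^2$. Let $H'=H\setminus H^{ex}$ and let $\pi'$ be its degree sequence. Then $\pi'$ is $q$-unbalanced (being the degree sequence of a disjoint union of $q$-unbalanced bipartite graphs), has length $n-v(H^{ex})\le n-M$, and satisfies $\Delta(\pi')\le D$.

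Next, apply \autoref{konstans} to $\pi'$ and $G$. The small length mismatch (we have $|\pi'|\le n-M$ rather than $=n-M$) is harmless: either re-inspect the proof of \autoref{konstans} and observe that the length bound is used only through a slack inequality of the form $n-|\pi'|\ge M$, so extra uncovered vertices cause no trouble, or pad $\pi'$ with isolated vertices up to length exactly $n-M$, placing them on the larger side to preserve $q$-unbalancedness. This produces a subset $V_1\subset V(G)$ of size $n-v(H^{ex})$ and an edge-preserving embedding of $H'$ into $G[V_1]$. Set $U=V(G)\setminus V_1$, so $|U|=v(H^{ex})\le M+4D^2$.

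Finally, define $G'$ from $G$ by discarding all edges of $G[U]$ and inserting an arbitrary copy of $H^{ex}$ on vertex set $U$, leaving every edge of $G$ incident to $V_1$ unchanged. The symmetric difference of the edge sets lies inside $\binom{U}{2}$, so the edit distance between $G$ and $G'$ is at most $\binom{|U|}{2}\le \binom{M+4D^2}{2}=:K$, which depends only on $\eta$, $D$, and $q$. Combining the embedding of $H'$ in $G[V_1]$ with the fixed copy of $H^{ex}$ on $U$ yields an embedding of $H$, and hence of $\pi$, into $G'$. The main (very mild) obstacle is the length bookkeeping needed to legitimately apply \autoref{konstans} with $|\pi'|\le n-M$; once that is addressed, the rest is a direct constant-size patching argument.
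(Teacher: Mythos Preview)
Your reduction to \autoref{konstans} is exactly the route the paper intends; indeed the paper gives no separate proof and simply states that \autoref{fotetel2} ``follows easily from'' \autoref{konstans}, and your argument is the natural way to make that precise. One small imprecision: after invoking \autoref{konstans} you obtain an embedding of \emph{some} realization of $\pi'$, not necessarily of the specific graph $H'$ you started with, but since $\pi$ is the disjoint union of $\pi'$ and the degree sequence of $H^{ex}$, any realization of $\pi'$ on $V_1$ together with $H^{ex}$ on $U$ still realizes $\pi$, so this does not matter. Of your two fixes for the length mismatch, the first (noting that the proof of \autoref{konstans} only needs $n-|\pi'|\ge M$) is the cleaner one; the padding-with-zeros option is also fine but one should observe that the padded zeros are necessarily isolated vertices in any realization and can therefore be moved from $V_1$ to $U$, as you implicitly do when declaring $|V_1|=n-v(H^{ex})$.
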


Here is an example showing that \autoref{konstans} and \ref{fotetel2} are essentially best possible. 

\begin{example}
	Assume that $\pi$ has only odd numbers and $G$ has at least one odd sized component. The embedding is impossible. Indeed, any realization of $\pi$ has only even sized components, hence $G$ cannot contain it as a spanning subgraph.
\end{example}

Note that this example does not work in case $G$ is connected. In \autoref{fotetel} the minimum degree $\delta(G)\ge n/2+\eta n,$ hence, $G$ is connected, and in this case we can embed
$\pi$ into $G$.


\begin{thebibliography}{100}
	
	\bibitem{AS}
	N.~Alon and J.~Spencer.
	\newblock {\em The Probabilistic Method}.
	\newblock Wiley, third edition, 2008.
	
	\bibitem{chvatal}
	V.~Chv\'atal and E.~Szemer\'edi.
	\newblock On the {E}rd{\H o}s--{S}tone {T}heorem.
	\newblock {\em Journal of the London Mathematical Society}, s2-23(2):207--214,
	1981.
	
	\bibitem{corradi}
	K.~Corr\'adi and A.~Hajnal.
	\newblock On the maximal number of independent circuits in a graph.
	\newblock {\em Acta Mathematica Academiae Scientiarum Hungaricae}, 14:423--439,
	1963.
	
	\bibitem{wellsep}
	B.~Csaba.
	\newblock On embedding well-separable graphs.
	\newblock {\em Discrete Mathematics}, 308(19):4322--4331, 2008.
	
	\bibitem{ferrara}
	J.~Diemunsch, M.~Ferrara, S.~Jahanbekam, and J.~Shook.
	\newblock Extremal theorems for degree sequence packing and the 2-color
	discrete tomography problem.
	\newblock {\em {S}{I}{A}{M} Journal of Discrete Mathematics}, 29(4):2088--2099,
	2015.
	
	\bibitem{dirac}
	G.~Dirac.
	\newblock Some theorems on abstract graphs.
	\newblock {\em Proceedings of the London Mathematical Society}, s3-2(1):69--81,
	1952.
	
	\bibitem{gale}
	D.~Gale.
	\newblock A theorem on flows in networks.
	\newblock {\em Pacific Journal of Mathematics}, 7(2):1073--1082, 1957.
	
	\bibitem{KSSz97}
	J.~Koml\'os, G.~S\'ark\"ozy, and E.~Szemer\'edi.
	\newblock Blow-up lemma.
	\newblock {\em Combinatorica}, 17:109--123, 1997.
	
	\bibitem{KSSz98}
	J.~Koml\'os, G.~S\'ark\"ozy, and E.~Szemer\'edi.
	\newblock An algorithmic version of the blow-up lemma.
	\newblock {\em Random Structures and Algorithms}, 12:297--312, 1998.
	
	\bibitem{KS93}
	J.~Koml\'os and M.~Simonovits.
	\newblock Szemer\'edi's regularity lemma and its applications in graph theory.
	\newblock {\em Combinatorics: Paul Erd\H{o}s is eighty}, 2:295--352, 1993.
	
	\bibitem{lovasz}
	L.~Lov\'asz.
	\newblock {\em {C}ombinatorial problems and exercises)}.
	\newblock Providence, second edition, 2007.
	
	\bibitem{Ryan}
	R.~Martin.
	\newblock The edit distance in graphs: Methods, results, and generalizations.
	\newblock In A.~Beveridge, J.~Griggs, L.~Hogben, G.~Musiker, and P.~Tetali,
	editors, {\em Recent Trends in Combinatorics}, pages 31--62. Springer
	International Publishing, Cham, 2016.
	
	\bibitem{ryser}
	H.~Ryser.
	\newblock Combinatorial properties of matrices of zeros and ones.
	\newblock {\em Canadian Journal of Mathematics}, 9:371--377, 1957.
	
	\bibitem{sissokho}
	M.~Sahs, P.~Sissokho, and J.~Torf.
	\newblock A zero-sum theorem over $\mathbb{Z}$.
	\newblock {\em Integers}, 13:\#A70, 2013.
	
	\bibitem{SZ}
	E.~Szemer\'edi.
	\newblock Regular partitions of graph.
	\newblock {\em Colloques Internationaux C.N.R.S - Probl\`emes Combinatoires et
		Th\'eorie des Graphes}, pages 399--401, 1976.
	
	\bibitem{west}
	D.~West.
	\newblock {\em Introduction to graph theory}.
	\newblock Prentice Hall, second edition, 2001.
	
\end{thebibliography}

\end{document}